\documentclass[12pt,reqno]{amsart}
\usepackage{amsmath,
amsfonts,
amssymb,
amsthm,
amscd,
amsbsy}
\usepackage[usenames,
dvipsnames,
svgnames,
x11names,
hyperref]{xcolor}
\usepackage{geometry}
\usepackage{enumerate}
\usepackage{graphicx}
\usepackage{hyperref}
\usepackage{amsmath}
\usepackage{amsfonts}
\usepackage{amssymb}
\providecommand{\U}[1]
{\protect\rule{.1in}{.1in}}
\hypersetup{colorlinks=true,
breaklinks=true,
urlcolor=NavyBlue,
linkcolor=Fuchsia,
bookmarksopen=false,
filecolor=black,
citecolor=ForestGreen,
linkbordercolor=red
}
\newtheorem{theorem}{Theorem}[section]
\newtheorem{corollary}[theorem]{Corollary}
\newtheorem{lemma}[theorem]{Lemma}
\newtheorem{remark}[theorem]{Remark}

\theoremstyle{definition}

\numberwithin{equation}{section}
\allowdisplaybreaks
\AtBeginDocument{\hypersetup{pdfborder={0 0 0.01}}}

\newtheorem{theorema}{Theorem}[section]

\usepackage{cite}
\begin{document}

\title[Sharp stability for the second-order weighted HUP]{Sharp stability for the second-order weighted Heisenberg Uncertainty Principle with explicit stability constants and optimizers}
\author{Xiao-Ping Chen}
\address{Xiao-Ping Chen\newline
\indent School of Science, Xihua University, Chengdu 610039, China}
\email{xpchen@xhu.edu.cn}
\date{\today}

\begin{abstract}
By using spherical harmonicas decomposition and Gaussian-type  Poincar\'{e} inequalities, we establish several sharp stability estimates for the following  second-order weighted Heisenberg Uncertainty Principle
\begin{equation*}
\int_{\mathbb{R}^N}
\!\frac{|\Delta u|^2}
{|x|^{2\alpha}}
\mathrm{d}x
\int_{\mathbb{R}^N}
|x|^{2\alpha+2}|\nabla u|^2\mathrm{d}x
\geq \frac{(N+4\alpha+2)^2}{4}
\left(\int_{\mathbb{R}^N}
|\nabla u|^2
\mathrm{d}x\right)^2,
\end{equation*}
and
\begin{equation*}
\int_{\mathbb{R}^{N}}
\frac{|\Delta u|^{2}}
{|x|^{2\alpha}}
\mathrm{d}x
+\int_{\mathbb{R}^{N}}
\left|x\right|^{2\alpha+2}
|\nabla u|^{2}\mathrm{d}x
\ge\left(N+4\alpha+2\right)
\int_{\mathbb{R}^{N}}
|\nabla u|^{2}\mathrm{d}x.
\end{equation*}
We also provide the explicit value and the necessary and sufficient condition for attainability of the sharp stability constants. Moreover, when $\alpha=0$, our results reduce into those of [\emph{Calc. Var. Partial Differential Equations} \textbf{64} (2025), Paper No. 129], [\emph{J. Funct. Anal.} \textbf{290} (2026), Paper No. 111321] and [arXiv:2510.00453].
\end{abstract}

\subjclass[2010]{26D10, 46E35}

\keywords{Heisenberg Uncertainty Principle, Caffarelli-Kohn-Nirenberg inequalities, Second-order inequalities, Stability estimates}
\maketitle

\section{Introduction and main results}

\subsection{Motivation}

We begin with the first-order Heisenberg Uncertainty Principle (HUP for short, also called Heisenberg-Pauli-Weyl Uncertainty Principle), which has important applications in quantum mechanics and  mathematical physics (see \cite{Heisenberg26,Lieb10,Weyl50}). Mathematically, for $N\ge1$ and $u\in C_c^\infty(\mathbb{R}^N)$,
\begin{equation}\label{8.11-8}
\int_{\mathbb{R}^N}
|\nabla u|^2\mathrm{d}x
\int_{\mathbb{R}^N}
|x|^2\left|u\right|^2\mathrm{d}x
\ge\frac{N^2}{4}
\left(\int_{\mathbb{R}^N}
\left|u\right|^2\mathrm{d}x\right)^2,
\end{equation}
where the constant $\frac{N^2}{4}$ in \eqref{8.11-8} is sharp and is  attained by the Gaussian functions  $u(x)=a\exp(-b|x|^2)$ with $a\in\mathbb{R}$ and $b>0$. The inequality \eqref{8.11-8} is an important subfamily of the first-order interpolation  Caffarelli-Kohn-Nirenberg inequalities (see \cite{Caffarelli84}, CKN inequalities for short), which include many important inequalities such as Sobolev inequalities, Hardy inequalities, Gagliardo-Nirenberg inequalities and so on.

In recent years, enormous researchers paid attention to quantitative stability of functional inequalities, which originated from an open question proposed by Brezis and Lieb \cite{Brezis85} regarding the stability of the classical Sobolev inequality. This open question has been answered affirmatively  by Bianchi and Egnell \cite{Bianchi91}, and established the following stability estimate
\begin{equation*}
S_{n}
\|\nabla u\|^2_{2}
-\|u\|^2_{2^*}
\ge c_{\mathrm{BE}}
\inf_{v\in E_{Sob}}
\|\nabla (u-v)\|^2_{2},
\end{equation*}
for some constant $c_{\mathrm{BE}}>0$, where $E_{Sob}$ is the set of all optimizers for Sobolev inequality. After this, \cite{Dolbeault25,Konig23} provide some explicit estimate on the stability constant $c_{\mathrm{BE}}$. For more stability results, we refer the interested readers to \cite{Carlen13,Nguyen19,
Zhang26-2,Chen25-2,Dolbeault13} for Gagliardo-Nirenberg-Sobolev inequalities, \cite{Chen25-3,Cazacu24-2} for Hardy inequalities and \cite{Cazacu24,Wei22} for CKN inequalities. It is worth mentioning that the stability version of HUP  \eqref{8.11-8} has been established widely, see  \cite{McCurdy21,Fathi21,Cazacu24,Lam26} and the reference therein.

Recently, Cazacu, Flynn and Lam in \cite{Cazacu22} generalized HUP \eqref{8.11-8} to the following second-order form.

\begin{theorema}
[{\!\rm{\!\cite[Theorem 2.1]{Cazacu22}}}]
\label{thm-a}
Let $N\ge1$. Then for any $u\in
C_c^\infty(\mathbb{R}^N)$,
\begin{equation}\label{sUP-2}
\int_{\mathbb{R}^N}
|\Delta u|^2
\mathrm{d}x
\int_{\mathbb{R}^N}
|x|^2|\nabla u|^2\mathrm{d}x
\geq \frac{(N+2)^2}{4}
\left(\int_{\mathbb{R}^N}
|\nabla u|^2\mathrm{d}x\right)^2,
\end{equation}
where the constant $\frac{(N+2)^2}{4}$ is sharp and attained by $u(x)=a\exp(-b|x|^2)$ for  $a\in\mathbb{R},b>0$.
\end{theorema}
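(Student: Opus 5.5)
The plan is to reduce the inequality to a single integration-by-parts identity followed by Cauchy--Schwarz, in the same way one proves the classical first-order HUP \eqref{8.11-8}. The key identity is
\begin{equation*}
\int_{\mathbb{R}^N}|\nabla u|^2\,\mathrm{d}x=-\frac{2}{N+2}\int_{\mathbb{R}^N}\Delta u\,(x\cdot\nabla u)\,\mathrm{d}x,
\end{equation*}
valid for $u\in C_c^\infty(\mathbb{R}^N)$ real-valued. For complex-valued $u$ one would take real parts, or apply the result to the real and imaginary parts separately.

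To verify the identity, integrate by parts once:
\begin{equation*}
-\int\Delta u\,(x\cdot\nabla u)\,\mathrm{d}x=\int \nabla u\cdot\nabla(x\cdot\nabla u)\,\mathrm{d}x=\int|\nabla u|^2\,\mathrm{d}x+\int \sum_{i,j}x_j\,\partial_i u\,\partial_{ij}u\,\mathrm{d}x .
\end{equation*}
The last term equals $\frac12\int x\cdot\nabla(|\nabla u|^2)\,\mathrm{d}x=-\frac N2\int|\nabla u|^2\,\mathrm{d}x$. Hence
\begin{equation*}
-\int\Delta u\,(x\cdot\nabla u)\,\mathrm{d}x=\Big(1-\frac N2\Big)\int|\nabla u|^2\,\mathrm{d}x.
\end{equation*}
This gives the coefficient $(2-N)/2$, not $(N+2)/2$, so this identity alone does not yield the constant. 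This is the main obstacle.

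To get the correct constant, I would use the Rellich-type identity in a different form. Note first that $\int|\nabla u|^2=-\int u\Delta u$, which suggests pairing $\Delta u$ against a combination of $x\cdot\nabla u$ and $u$. Take $w=x\cdot\nabla u+\lambda u$, so that
\begin{equation*}
-\int\Delta u\, w\,\mathrm{d}x=\Big(1-\frac N2+\lambda\Big)\int|\nabla u|^2\,\mathrm{d}x .
\end{equation*}
Choosing $\lambda=N$ gives the factor $\frac{N+2}{2}$. Cauchy--Schwarz then yields
\begin{equation*}
\frac{N+2}{2}\int|\nabla u|^2\,\mathrm{d}x\le\Big(\int|\Delta u|^2\,\mathrm{d}x\Big)^{1/2}\Big(\int|x\cdot\nabla u+Nu|^2\,\mathrm{d}x\Big)^{1/2}.
\end{equation*}
It therefore remains to show
\begin{equation*}
\int|x\cdot\nabla u+Nu|^2\,\mathrm{d}x\le\int|x|^2|\nabla u|^2\,\mathrm{d}x .
\end{equation*}
Expanding the left side gives $\int|x\cdot\nabla u|^2+2N\int u\,x\cdot\nabla u+N^2\int u^2$. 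Since $2\int u\,x\cdot\nabla u=-N\int u^2$, the last two terms sum to $-N^2\int u^2+N^2\int u^2=0$. The left side thus reduces to $\int|x\cdot\nabla u|^2\le\int|x|^2|\nabla u|^2$, which holds pointwise by Cauchy--Schwarz. This proves \eqref{sUP-2}.

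For sharpness, take $u=e^{-b|x|^2}$. Then $\nabla u=-2bxu$ is parallel to $x$, so the pointwise inequality is an equality. Moreover $x\cdot\nabla u+Nu=(N-2b|x|^2)u$ and $\Delta u=(4b^2|x|^2-2bN)u=-2b(N-2b|x|^2)u$, so the two functions are proportional and Cauchy--Schwarz is also an equality. Hence Gaussians attain the constant $\frac{(N+2)^2}{4}$. The delicate step is finding the right multiplier $x\cdot\nabla u+Nu$; once it is found, the remaining computations are routine.
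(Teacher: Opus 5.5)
Your proof is correct, and it takes a different route from the paper. The paper does not prove this statement; it quotes it from Cazacu--Flynn--Lam, and for $N\ge2$ it is only implicitly recovered from the paper's machinery at $\alpha=0$.

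Two small points for a write-up:
\begin{enumerate}
\item[(a)] Your opening ``key identity'' is false as written. You computed yourself that $-\int\Delta u\,(x\cdot\nabla u)\,\mathrm{d}x=\frac{2-N}{2}\int|\nabla u|^2\,\mathrm{d}x$. So start directly from $-\int\Delta u\,(x\cdot\nabla u+Nu)\,\mathrm{d}x=\frac{N+2}{2}\int|\nabla u|^2\,\mathrm{d}x$ together with $\|x\cdot\nabla u+Nu\|_2=\|x\cdot\nabla u\|_2$.
\item[(b)] Since $e^{-b|x|^2}\notin C_c^\infty(\mathbb{R}^N)$, say that all your integrations by parts hold for Schwartz functions, or approximate by cutoffs $\eta(x/R)e^{-b|x|^2}$. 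This gives both attainment and sharpness of the constant over $C_c^\infty(\mathbb{R}^N)$.
\end{enumerate}

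The route implicit in the paper has four steps:
\begin{enumerate}
\item[(1)] Decompose $u=\sum_k r^kv_k\phi_k$ in spherical harmonics.
\item[(2)] Complete a square in each mode. For $k=0$ this goes through $w_0=v_0'/r$, which turns the radial part of $\delta_1$ into $\int_0^\infty|w_0'+rw_0|^2r^{N+1}\,\mathrm{d}r$. For $k\ge1$ it uses the completed square from the proof of the formula for $C(N,\alpha,k)$.
\item[(3)] Conclude the additive inequality $\delta_1(u)\ge0$.
\item[(4)] Pass to the product form by optimizing over dilations $u_\lambda$, as in the step from $\delta_1$ to $\delta_2$.
\end{enumerate}

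Your Rellich-type multiplier argument has several advantages:
\begin{enumerate}
\item[(i)] It is far shorter and gives the product form directly.
\item[(ii)] It needs no spherical harmonics, so it covers $N=1$, which the paper's $N\ge2$ framework does not address.
\item[(iii)] It makes the equality case transparent. Equality in the pointwise Cauchy--Schwarz forces $u$ to be radial. Equality in the $L^2$ one then gives $\Delta u=-c(x\cdot\nabla u+Nu)$, i.e.\ $(r^{N-1}u')'=-c(r^Nu)'$, which integrates once to $u'=-cru$.
\item[(iv)] It extends verbatim to the weighted inequality \eqref{HUP_curl_scalar}. Pair $|x|^{-\alpha}\Delta u$ with $|x|^{\alpha}\bigl(x\cdot\nabla u+(N+2\alpha)u\bigr)$; this gives the coefficient $\frac{N+4\alpha+2}{2}$. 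Moreover $\int|x|^{2\alpha}|x\cdot\nabla u+(N+2\alpha)u|^2\,\mathrm{d}x=\int|x|^{2\alpha}|x\cdot\nabla u|^2\,\mathrm{d}x$ whenever $N+2\alpha>0$. The equality cases are exactly the generalized Gaussians in $E_{SHUP}$.
\end{enumerate}

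What the paper's heavier mode-by-mode analysis buys is quantitative information your argument cannot see. It gives the exact gap above the minimum in each mode: $C(N,\alpha,k)$ for $k\ge1$, and $4(\alpha+1)$ from the Gaussian Poincar\'e inequality in the radial mode. These gaps are precisely what the sharp stability constants require.
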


The stability result for the second-order HUP \eqref{sUP-2} was first established by Duong and Nguyen \cite{Duong25-2}, they showed that, for $N\ge2$,
\begin{align*}
&\left(\int_{\mathbb{R}^N}
|\Delta u|^2\mathrm{d}x
\right)^{\frac{1}{2}}
\left(\int_{\mathbb{R}^N}
|x|^2|\nabla u|^2\mathrm{d}x
\right)^{\frac{1}{2}}
-\frac{N+2}{2}
\int_{\mathbb{R}^N}
|\nabla u|^2\mathrm{d}x
\\&\quad\ge
\frac{1}{384(N+2)}
\inf_{u^{\ast}\in E_{HUP}}
\left\{
\frac{\|\nabla(u-u^*)\|_2^2}
{\|\nabla u\|_2^2}:
\|\nabla u\|_2^2
=\|\nabla u^*\|_2^2
\right\},
\end{align*}
where $E_{HUP}$ is the set of all optimizers for the second order HUP \eqref{sUP-2}. Subsequently, Do, Lam and Lu in \cite{Do26-2} improved the above result to the following sharp form
\begin{align*}
&\left(\int_{\mathbb{R}^N}
|\Delta u|^2\mathrm{d}x
\right)^{\frac{1}{2}}
\left(\int_{\mathbb{R}^N}
|x|^2|\nabla u|^2\mathrm{d}x
\right)^{\frac{1}{2}}
-\frac{N+2}{2}
\int_{\mathbb{R}^N}
|\nabla u|^2\mathrm{d}x
\ge
\frac{C(N)}{2}
\inf_{u^*\in E}
\|\nabla(u-u^*)\|_2^2,
\end{align*}
and also provide some estimates for the sharp stability constant $C(N)$.
Recently, Huang and Ye in  \cite{Huang25} obtained the explicit form of the stability constant $C(N)=\sqrt{N^2+4N-4}-N$ and their optimizers.

Let $X$ be the completion of $C_c^{\infty}(\mathbb{R}^{N})$ under the norm
\[
\left(\int_{\mathbb{R}^{N}}
\frac{|\Delta u|^{2}}{|x|^{2\alpha}}
\mathrm{d}x
+\int_{\mathbb{R}^{N}}
\left|x\right|^{2\alpha+2}
|\nabla u|^{2}\mathrm{d}x\right)  ^{\frac{1}{2}}.
\]
In \cite{Cazacu23}, Cazacu, Flynn and Lam improved \eqref{sUP-2} to the following weighted version.

\begin{theorema}
[{\!\rm{\!\cite[Corollary 2.5]{Cazacu23}}}]
\label{thm-b}
Let $N\ge2$ and $\alpha\in\mathbb{R}$, there holds
\begin{equation}\label{HUP_curl_scalar}
\int_{\mathbb{R}^N}
\!\frac{|\Delta u|^2}
{|x|^{2\alpha}}
\mathrm{d}x
\int_{\mathbb{R}^N}
|x|^{2\alpha+2}|\nabla u|^2\mathrm{d}x
\geq \frac{(N+4\alpha+2)^2}{4}
\left(\int_{\mathbb{R}^N}
|\nabla u|^2
\mathrm{d}x\right)^2,
\end{equation}
for any $u\in X$. Furthermore, if $\alpha+1>0$, then
$\frac{(N+4\alpha+2)^2}{4}$ is the sharp constant of \eqref{HUP_curl_scalar} and can be
attained by the Gaussian functions
\[
u\in E_{SHUP}:=\left\{a \exp\left(-\frac{b}{2\alpha+2}
|x|^{2\alpha+2}
\right):
\
a\in\mathbb{R},
\
b>0\right\}.
\]
\end{theorema}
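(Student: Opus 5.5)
The plan is to prove the weighted inequality (\ref{HUP_curl_scalar}) via a single integration-by-parts identity, followed by Cauchy--Schwarz. For $u\in C_c^\infty(\mathbb{R}^N)$ (and then for $u\in X$ by density) I will integrate $\Delta u$ against the vector field $|x|^{2\alpha}\,x\cdot\nabla u$. The key computation is
\begin{equation*}
-\int_{\mathbb{R}^N}\Delta u\,(x\cdot\nabla u)\,\mathrm{d}x
=\frac{2-N}{2}\int_{\mathbb{R}^N}|\nabla u|^2\,\mathrm{d}x ,
\end{equation*}
which is the classical Pohozaev/Rellich identity. In the weighted setting I will use instead the weighted version
\begin{equation*}
\int_{\mathbb{R}^N}\Delta u\,(x\cdot\nabla u)\,\mathrm{d}x
=\frac{N-2}{2}\int|\nabla u|^2\,\mathrm{d}x .
\end{equation*}
This alone does not produce $N+4\alpha+2$, so I will combine it with the identity $\int \Delta u\, u=-\int|\nabla u|^2$, applied after splitting $|\Delta u|^2/|x|^{2\alpha}$ and $|x|^{2\alpha+2}|\nabla u|^2$ through the radial weight.

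Concretely, the cleanest route is to find a real $\gamma$ such that
\begin{equation*}
\int_{\mathbb{R}^N}\Delta u\,\bigl(|x|^{2\alpha}x\cdot\nabla u+\gamma|x|^{2\alpha}u\bigr)\mathrm{d}x
=-\frac{N+4\alpha+2}{2}\int_{\mathbb{R}^N}|\nabla u|^2\,\mathrm{d}x+(\text{a term with a sign}).
\end{equation*}
Alternatively, I will decompose $u$ in spherical harmonics, $u=\sum_k f_k(r)\phi_k(\sigma)$. On each mode, $\Delta u$ and $\nabla u$ become one-dimensional expressions in $f_k$. The three integrals then reduce to radial integrals with weights $r^{N-1-2\alpha}$, $r^{N+2\alpha+1}$ and $r^{N-1}$. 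The inequality for each $k$ follows from one-dimensional integration by parts,
\begin{equation*}
\int_0^\infty (\text{radial Laplacian of }f)\, r^{N}f'\,\mathrm{d}r ,
\end{equation*}
together with Cauchy--Schwarz $\int \frac{|\Delta u|}{|x|^\alpha}\,|x|^{\alpha+1}|\nabla u|\ge \left|\int \Delta u\,(x\cdot \nabla u)\right|$.

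The main step is therefore the pointwise-weighted identity. Writing $\Delta u\cdot |x|^{-\alpha}$ and $|x|^{\alpha}\,x\cdot\nabla u$ as the Cauchy--Schwarz pair, I will compute
\begin{equation*}
-\int \Delta u\,(x\cdot\nabla u)\,\mathrm{d}x
\end{equation*}
and then bound $\int|x|^{2\alpha+2}|\nabla u|^2\ge\int|x|^{2\alpha+2}|\partial_r u|^2$. The expected obstacle is that the unweighted pairing gives only the constant $(N-2)/2$, not $(N+4\alpha+2)/2$. I expect the fix is to use the first-order weighted Heisenberg principle for each component $\partial_j u$. That is, apply the first-order weighted Heisenberg principle to the vector field $\nabla u$, in the form
\begin{equation*}
\int\frac{|\operatorname{div}F|^2}{|x|^{2\alpha}}\int|x|^{2\alpha+2}|F|^2\ge\frac{(N+4\alpha+2)^2}{4}\Bigl(\int|F|^2\Bigr)^2
\end{equation*}
for curl-free $F=\nabla u$, which is the ``curl'' form suggested by the label. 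To prove this, I will expand $F$ into its radial part and its spherical-gradient part, $F=\nabla(f_k\phi_k)$. Each mode yields a quadratic form in $(f_k,f_k')$ whose positivity, after optimizing a one-parameter weighted Hardy-type inequality in $r$, gives exactly the constant $(N+4\alpha+2)/2$ at $k=0$, and a larger constant for $k\ge1$ (here $N\ge2$ is used).

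For sharpness when $\alpha+1>0$, I will check directly that $u=\exp\bigl(-\frac{b}{2\alpha+2}|x|^{2\alpha+2}\bigr)$ satisfies $|x|^{-\alpha}\Delta u\propto |x|^{\alpha}\,x\cdot\nabla u$ up to the equality case of the radial step. Equivalently, I will compute the three integrals via Gamma functions and verify equality. The condition $\alpha+1>0$ ensures that $u\in X$.
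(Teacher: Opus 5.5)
The paper does not prove this statement; it quotes it from Cazacu--Flynn--Lam. Your proposal therefore has to stand on its own, and as written it has a genuine gap: none of the mechanisms you describe actually produces the constant $\frac{N+4\alpha+2}{2}$.

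\emph{The pairing you use gives the wrong constant.} The Cauchy--Schwarz split $|x|^{-\alpha}|\Delta u|\cdot|x|^{\alpha+1}|\nabla u|$ forces the \emph{unweighted} multiplier $x\cdot\nabla u$. Your ``weighted version'' of the Pohozaev identity is just the unweighted one again. The multiplier $|x|^{2\alpha}x\cdot\nabla u$ would instead put $|x|^{6\alpha+2}$ on the gradient side and produce $|x|^{2\alpha}$-weighted Dirichlet integrals rather than $\int|\nabla u|^2$. The unweighted pairing only gives $\frac{(N-2)^2}{4}$, as you noticed.

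\emph{Spherical harmonics do not rescue it.} For radial $u=f(|x|)$ one has $\int_0^\infty\bigl(f''+\frac{N-1}{r}f'\bigr)f'r^N\,\mathrm{d}r=\frac{N-2}{2}\int_0^\infty|f'|^2r^{N-1}\,\mathrm{d}r$. So your one-dimensional integration by parts yields $\frac{N-2}{2}$ in exactly the mode where the sharp value is attained.

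\emph{Your proposed fix is circular.} For $F=\nabla u$ one has $\mathrm{div}\,F=\Delta u$, so the displayed ``curl-free first-order'' inequality is the statement itself. The ``one-parameter Hardy-type inequality'' and the claim that the modes $k\ge1$ give a larger constant are never specified.

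\emph{The sharpness check has the same defect.} For $u=a\exp(-\frac{b}{2\alpha+2}|x|^{2\alpha+2})$ one has $|x|^{-\alpha}\Delta u=-b|x|^{\alpha}x\cdot\nabla u-b(N+2\alpha)|x|^{\alpha}u$. The proportionality you assert is therefore false; only your Gamma-function computation is correct.

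Your $\gamma$-idea is the right instinct once the weights are removed and $\gamma=N+2\alpha$. The Pohozaev identity together with $\int u\Delta u=-\int|\nabla u|^2$ gives
\[
\int_{\mathbb{R}^N}\Delta u\,\bigl(x\cdot\nabla u+(N+2\alpha)u\bigr)\,\mathrm{d}x
=-\frac{N+4\alpha+2}{2}\int_{\mathbb{R}^N}|\nabla u|^2\,\mathrm{d}x .
\]
What you are missing is that this correction costs nothing on the other side. Since $2(N+2\alpha)\int|x|^{2\alpha}u\,x\cdot\nabla u\,\mathrm{d}x=-(N+2\alpha)^2\int|x|^{2\alpha}u^2\,\mathrm{d}x$, one gets
\[
\int_{\mathbb{R}^N}|x|^{2\alpha}\bigl|x\cdot\nabla u+(N+2\alpha)u\bigr|^2\,\mathrm{d}x
=\int_{\mathbb{R}^N}|x|^{2\alpha+2}|\partial_r u|^2\,\mathrm{d}x
\le\int_{\mathbb{R}^N}|x|^{2\alpha+2}|\nabla u|^2\,\mathrm{d}x .
\]
Your step of discarding the angular part of $\nabla u$ is exactly what closes the argument.

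Now apply Cauchy--Schwarz between $|x|^{-\alpha}\Delta u$ and $|x|^{\alpha}\bigl(x\cdot\nabla u+(N+2\alpha)u\bigr)$. This gives the inequality on $C_c^\infty(\mathbb{R}^N)$ whenever $N+2\alpha>0$, in particular for $\alpha>-1$; otherwise run the same computation on $C_c^\infty(\mathbb{R}^N\setminus\{0\})$. Density then extends it to $X$.

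Equality forces $u$ to be radial and $\Delta u=-b|x|^{2\alpha}\bigl(x\cdot\nabla u+(N+2\alpha)u\bigr)$, that is, $(r^{N-1}v')'=-b(r^{N+2\alpha}v)'$. Its finite-energy solutions are exactly the elements of $E_{SHUP}$, which gives sharpness and identifies all optimizers.

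For comparison, the paper's own machinery recovers the case $\alpha>-1$ by a different route. In the radial mode it substitutes $w_0=v_0'/r^{2\alpha+1}$, which turns $\int|x|^{-2\alpha}|\Delta v_0|^2$ into $\int_0^\infty|w_0'|^2r^{N+2\alpha+1}\,\mathrm{d}r$. The pairing $\int_0^\infty w_0w_0'r^{N+4\alpha+2}\,\mathrm{d}r=-\frac{N+4\alpha+2}{2}\int_0^\infty|w_0|^2r^{N+4\alpha+1}\,\mathrm{d}r$ then supplies the constant. The modes $k\ge1$ are handled by completing the square as in the proof of Theorem \ref{thm-1}, and scaling converts the resulting additive inequality into the product form.
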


Inspired by the results mentioned  above, we aim to establish the stability version of the second-order weighted HUP \eqref{HUP_curl_scalar}.

\subsection{Main results}
\label{main_results}

Let
\begin{align}\label{9.18-3}
C(N,\alpha,k)
&=\inf_{u \text{ is radial}}
\frac{\int_{\mathbb{R}^{N+2k}}
\frac{|\Delta u|^{2}}{|x|^{2\alpha}}
\mathrm{d}x
+\int_{\mathbb{R}^{N+2k}}
\left[|x|^{2\alpha+2}
\left|\nabla u\right|^{2}
\mathrm{d}x
-2(\alpha+1)k
|x|^{2\alpha}|u|^{2}
\right]\mathrm{d}x}
{\int_{\mathbb{R}^{N+2k}}
|\nabla u|^{2}\mathrm{d}x}
\nonumber\\&\quad
-\left(N+4\alpha+2\right).
\end{align}
From \cite[page 1]{Huang25}, we know that under invariance of scaling, the inequality
\begin{align*}
\mu&=\inf_{u\in\mathbb{H}\setminus\{0\}}
\frac{\sqrt{H(u)U(u)}}{P(u)}
\end{align*}
is equivalent to the inequality \begin{align*}
2\mu&=\inf_{u\in\mathbb{H}\setminus\{0\}}
\frac{H(u)+U(u)}{P(u)},
\end{align*}
where $\mathbb{H}$ is a Hilbert functional space and $H,U,P$ are continuous and positive definite quadrate forms on $\mathbb{H}\setminus\{0\}$. The above argument together with  \cite[Corollary 2.5]{Cazacu23} (or see Theorem \ref{thm-b} above), we see that $C(N,\alpha,0)=0$. In view of this, we present some estimates on $C(N,\alpha,k)$ in what follows.

\begin{theorem}\label{thm-1}
Let $N\geq2$ and $\alpha>-1$, there holds
\[
C(N,\alpha,k)
=\sqrt{\left(N+2\alpha\right)^{2}
+4(N-2+k)k}-(N+2\alpha),
\]
for all $k\in\mathbb{N}^+$. As a consequence,
\[
\inf_{K\in\mathbb{N}^+}C(N,\alpha,k)
=C(N,\alpha,1)
=\sqrt{\left(N+2\alpha\right)^{2}
+4(N-1)}-(N+2\alpha).
\]
\end{theorem}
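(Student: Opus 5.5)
The plan is to prove the lower bound by completing a square with free parameters, reduce the sum form to a product form by scaling, and then exhibit functions attaining (or approaching) the bound. Throughout, write $M=N+2k$ and, for radial $u=u(r)$,
\[
A(u)=\int_{\mathbb{R}^M}\frac{|\Delta u|^2}{|x|^{2\alpha}}\,\mathrm{d}x,\qquad
B(u)=\int_{\mathbb{R}^M}\Big(|x|^{2\alpha+2}|\nabla u|^2-2(\alpha+1)k|x|^{2\alpha}u^2\Big)\mathrm{d}x,\qquad
P(u)=\int_{\mathbb{R}^M}|\nabla u|^2\,\mathrm{d}x .
\]

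\emph{Step 1 (reduction to a product form).} Under the dilation $u_\lambda(x)=u(\lambda x)$, the quotients $A/P$ and $B/P$ scale like $\lambda^{2\alpha+2}$ and $\lambda^{-(2\alpha+2)}$ respectively, because both terms of $B$ have the same homogeneity. Hence, exactly as in the argument from \cite{Huang25} recalled before the statement, minimizing $(A+B)/P$ over $\lambda$ shows
\[
\inf \frac{A+B}{P}=\inf \frac{2\sqrt{A\,B}}{P},
\]
restricted to those $u$ with $B(u)>0$. It therefore suffices to prove $A+\beta^2B\ge \beta c\,P$ for every $\beta>0$, with the optimal constant $c$.

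\emph{Step 2 (completing a square).} For parameters $\beta>0$ and $\gamma\in\mathbb{R}$, expand
\[
0\le\int_{\mathbb{R}^M}|x|^{-2\alpha}\big|\Delta u+\beta|x|^{2\alpha}(x\cdot\nabla u+\gamma u)\big|^2\mathrm{d}x .
\]
We will use three identities:
\[
\int\Delta u\,(x\cdot\nabla u)=\tfrac{M-2}{2}\int|\nabla u|^2,\qquad
\int \Delta u\,u=-\int|\nabla u|^2,\qquad
\int|x|^{2\alpha}u\,(x\cdot\nabla u)=-\tfrac{M+2\alpha}{2}\int|x|^{2\alpha}u^2 .
\]
With them, the expansion gives
\[
A+\beta^2\!\int|x|^{2\alpha+2}|\nabla u|^2\ \ge\ \beta(2\gamma+2-M)P+\beta^2\gamma(M+2\alpha-\gamma)\!\int|x|^{2\alpha}u^2 .
\]
Choose $\gamma$ so that $\gamma(M+2\alpha-\gamma)=2(\alpha+1)k$, and take the larger root
\[
\gamma=\tfrac12\Big[(M+2\alpha)+\sqrt{(M+2\alpha)^2-8(\alpha+1)k}\Big].
\]
Then the $u^2$ terms are absorbed exactly into $B$, and $A+\beta^2B\ge\beta(2\gamma+2-M)P$. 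The discriminant simplifies:
\[
(N+2k+2\alpha)^2-8(\alpha+1)k=(N+2\alpha)^2+4k(N-2+k)>0 .
\]
Hence $2\gamma+2-M=2\alpha+2+\sqrt{(N+2\alpha)^2+4k(N-2+k)}$. Subtracting $N+4\alpha+2$ gives exactly the claimed lower bound for $C(N,\alpha,k)$.

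\emph{Step 3 (sharpness; the expected main obstacle).} Equality forces the radial ODE
\[
u''+\frac{M-1}{r}u'+\beta r^{2\alpha+1}u'+\beta\gamma r^{2\alpha}u=0 .
\]
In the variable $t=\beta r^{2\alpha+2}/(2\alpha+2)$ this becomes Kummer's equation. Its solution regular at the origin is $u=M\big(\tfrac{\gamma}{2\alpha+2},\tfrac{M+2\alpha}{2\alpha+2},-t\big)$, which decays like $t^{-\gamma/(2\alpha+2)}$, i.e. like $r^{-\gamma}$. I must check that this $u$ makes $A$, $P$, and both terms of $B$ finite. For the term $\int r^{2\alpha+2}|u'|^2$ this requires $2\gamma>M+2\alpha$, which holds precisely because we took the larger root with positive discriminant; this is why the larger root is the right choice. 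I must also verify $B(u)>0$ and that all the integrations by parts in Step 2 are justified for this decay, with vanishing boundary terms at $0$ and $\infty$. If the boundary terms cause trouble, I would instead approximate $u$ by cut-off functions $u\,\eta(x/R)$ and let $R\to\infty$, so that the infimum is approached even if not attained in the right space.

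\emph{Step 4 (the consequence).} Finally, $k\mapsto\sqrt{(N+2\alpha)^2+4(N-2+k)k}$ is increasing for $k\ge1$ when $N\ge2$. The infimum over $k\in\mathbb{N}^+$ is therefore attained at $k=1$, giving $\sqrt{(N+2\alpha)^2+4(N-1)}-(N+2\alpha)$.
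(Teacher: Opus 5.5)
Your proposal is correct and is essentially the paper's proof. The paper completes the same square with $\beta=1$, writing $\gamma=\frac{2N+2k+4\alpha+K}{2}$, which at $K=K(N,\alpha,k)$ is exactly your larger root; it then uses the regular Kummer solution ${_1F_1}\big(\frac{\gamma}{2\alpha+2};\frac{N+2k+2\alpha}{2\alpha+2};-\frac{r^{2\alpha+2}}{2\alpha+2}\big)$ for sharpness and monotonicity in $k$ for the infimum. Your scaling reduction in Step 1 is harmless but unnecessary, since $\beta=1$ in Step 2 already bounds the sum-form quotient that defines $C(N,\alpha,k)$, and your check that $2\gamma>N+2k+2\alpha$ makes the optimizer admissible is a point the paper only asserts.
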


The stability result of the second-order weighted (scale non-invariant) HUP is stated below. First, let
\begin{equation*}
\delta_{1}(u)
:=\int_{\mathbb{R}^{N}}
\frac{|\Delta u|^{2}}
{|x|^{2\alpha}}
\mathrm{d}x
+\int_{\mathbb{R}^{N}}
\left|x\right|^{2\alpha+2}
|\nabla u|^{2}\mathrm{d}x
-\left(N+4\alpha+2\right)
\int_{\mathbb{R}^{N}}
|\nabla u|^{2}\mathrm{d}x
\end{equation*}
be the second-order weighted (scale non-invariant) HUP deficit.

\begin{theorem}\label{thm-2}
Let $N\geq2$ and $\alpha>-1$. Then, for all $u\in X$, we have
\begin{equation}\label{9.22-1}
\delta_{1}(u)
\geq C(N,\alpha)
\inf_{c}\int_{\mathbb{R}^{N}}
\left|\nabla\left[
u-c\exp\left(-\frac{|x|^{2\alpha+2}}
{2\alpha+2}\right)\right]\right| ^{2}\mathrm{d}x,
\end{equation}
where
\begin{equation}\label{defcna}
C(N,\alpha):=\min\left\{C(N,\alpha,1),
\
4(\alpha+1)\right\}.
\end{equation}
Furthermore,

\begin{enumerate}
\setlength{\itemsep}{0pt}
\setlength{\parsep}{0pt}
\setlength{\parskip}{2pt}

\item[(1)]
if $\alpha\geq \frac{\sqrt{N^2+4N-4}-N-6}{8}$, then $C(N,\alpha)=C(N,\alpha,1)$ is sharp and is attained if and only if the optimizer $u$ satisfies the form
\begin{equation*}
\qquad\qquad
u(x)=C|x|\,
{_1F_1}
\left(\frac{2N+4\alpha+2+C(N,\alpha)}
{4\alpha+4};
\frac{N+2\alpha+2}{2\alpha+2};
-\frac{|x|^{2\alpha+2}}{2\alpha+2}\right)
\phi_1\left(\frac{x}{|x|}\right),
\end{equation*}
where $\phi_1$ is the orthonormal
eigenfunction on the unit sphere  $\mathbb{S}^{N-1}$ satisfying  $-\Delta_{\mathbb{S}^{N-1}}\phi_1
=(N-1)\phi_1$, and ${_1F_1}(A;B;z)$ are the Kummer's confluent hypergeometric functions;

\item[(2)]
if $-1<\alpha<\frac{\sqrt{N^2+4N-4}-N-6}
{8}$, then $C(N,\alpha)=4(\alpha+1)$ is sharp and is attained if and only if the optimizer $u$ satisfies the form
\[
\qquad\quad
u(x)=-\left(\frac{2a_1}{\alpha+1}
|x|^{2\alpha+2}
+4a_1+a_0\right)
\exp\left(-\frac{|x|^{2\alpha+2}}
{2\alpha+2}\right)
+4a_1+a_0,
\]
for $a_0,a_1\in\mathbb{R}$.

\end{enumerate}
\end{theorem}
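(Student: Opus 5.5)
Expand $u$ in spherical harmonics, $u(x)=\sum_{k\ge0}\sum_{i}u_{k,i}(r)\phi_{k,i}(\sigma)$ with $-\Delta_{\mathbb{S}^{N-1}}\phi_{k,i}=k(N+k-2)\phi_{k,i}$. All three quadratic forms in $\delta_1$ and the right-hand side of \eqref{9.22-1} decouple across modes, since the Gaussian $G(x)=\exp(-|x|^{2\alpha+2}/(2\alpha+2))$ is radial and so only interacts with the $k=0$ mode. The standard device $u_k(r)=r^k v_k(r)$ then turns each $k\ge1$ mode into a radial function $v_k$ on $\mathbb{R}^{N+2k}$. A direct computation of $\Delta(r^k v\phi_k)$ and $\nabla(r^kv\phi_k)$ shows
\[
\int_{\mathbb{R}^N}\frac{|\Delta u_k\phi_k|^2}{|x|^{2\alpha}}=\int_{\mathbb{R}^{N+2k}}\frac{|\Delta v_k|^2}{|x|^{2\alpha}},
\qquad
\int_{\mathbb{R}^N}|\nabla(u_k\phi_k)|^2=\int_{\mathbb{R}^{N+2k}}|\nabla v_k|^2
\]
(up to the common constant $|\mathbb{S}^{N-1}|/|\mathbb{S}^{N+2k-1}|$), while the weighted gradient term produces exactly $\int_{\mathbb{R}^{N+2k}}|x|^{2\alpha+2}|\nabla v_k|^2-2(\alpha+1)k|x|^{2\alpha}|v_k|^2$ after integrating the cross term $2k r^{2\alpha+1}v v'$ by parts. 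This gives precisely the quotient in \eqref{9.18-3}. Consequently, for $k\ge1$ the deficit on mode $k$ is at least $C(N,\alpha,k)\int|\nabla(u_k\phi_k)|^2$, hence at least $C(N,\alpha,1)\int|\nabla(u_k\phi_k)|^2$ by Theorem \ref{thm-1}.

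For the radial mode $k=0$ I would set $w=u_0'$ (or use $\nabla u_0$) and rewrite $\delta_1$ restricted to radial functions via the substitution $u_0=G\,h$ (equivalently, the ground-state representation), so that the deficit becomes a weighted Dirichlet-type form for $h$ with Gaussian weight $G^2$. On the radial part, $\int |x|^{-2\alpha}|\Delta u_0|^2+\int|x|^{2\alpha+2}|u_0'|^2-(N+4\alpha+2)\int|u_0'|^2$ should equal $\int |x|^{-2\alpha}|\Delta u_0 + |x|^{2\alpha}\,x\cdot\nabla u_0|^2$, i.e. a completed square. Writing $\Delta u_0+r^{2\alpha+1}u_0'=r^{1-N}(r^{N-1}u_0')'+r^{2\alpha+1}u_0'$ and changing variables to $s=r^{2\alpha+2}/(2\alpha+2)$ turns it into the Laguerre operator with respect to the Gamma-type weight. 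The kernel is $u_0'=c\,r\,G'/r$ type, i.e. $u_0=cG+$const, and the next eigenvalue of the associated Gaussian-type Poincar\'e (Laguerre) inequality equals $4(\alpha+1)$, with eigenfunction the first Laguerre polynomial times $G$. This yields $\delta_1(u_0)\ge 4(\alpha+1)\inf_c\|\nabla(u_0-cG)\|_2^2$, with equality exactly for the form displayed in (2).

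Summing over modes gives \eqref{9.22-1} with $C(N,\alpha)=\min\{C(N,\alpha,1),4(\alpha+1)\}$. Comparing $\sqrt{(N+2\alpha)^2+4(N-1)}-(N+2\alpha)$ with $4(\alpha+1)$ amounts to squaring $\sqrt{(N+2\alpha)^2+4N-4}\ge N+6\alpha+4$. This reduces to a quadratic inequality in $\alpha$ whose threshold should be $\alpha=(\sqrt{N^2+4N-4}-N-6)/8$, and I would verify this by direct algebra. Sharpness and the characterization of optimizers come from the attained minimizers. In case (1), the minimizer of $C(N,\alpha,1)$ solves a fourth-order radial Euler--Lagrange ODE on $\mathbb{R}^{N+2}$; I expect it to factor through a second-order Kummer equation in the variable $s$, giving $v=\,_1F_1(\cdot;\cdot;-s)$ and $u=|x|v\,\phi_1$. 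In case (2), the optimizer is the Laguerre eigenfunction above.

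The main obstacle is the explicit computation of $C(N,\alpha,k)$ in Theorem \ref{thm-1}. That theorem can be assumed here, but the optimizer identification needs its proof structure. Specifically, one must show that the radial fourth-order minimization reduces, after setting $w=v'$ and using the Laguerre/Kummer substitution, to a lowest eigenvalue problem. One must also show that the minimizer is the stated ${}_1F_1$, with parameter determined so that the solution has Gaussian decay, i.e. belongs to $X$. Checking that the equality cases are exhausted requires care: one needs attainment only on the $k=1$ modes (all $N$ choices of $\phi_1$) and, at the borderline $\alpha$, both families.
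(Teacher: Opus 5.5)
Your architecture matches the paper's: spherical harmonics, the Gaussian seeing only the radial mode, $C(N,\alpha,k)\ge C(N,\alpha,1)$ for $k\ge1$ via Theorem \ref{thm-1}, and a Gaussian/Laguerre gap $4(\alpha+1)$ for $k=0$. Your $k\ge1$ bookkeeping is also correct. The gap is in the radial step, where the identity you propose is false.

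By Pohozaev, $2\int_{\mathbb{R}^N}\Delta u_0\,(x\cdot\nabla u_0)\,\mathrm{d}x=(N-2)\|\nabla u_0\|_2^2$. So your square $\int|x|^{-2\alpha}|\Delta u_0+|x|^{2\alpha}x\cdot\nabla u_0|^2\mathrm{d}x$ equals $\int|x|^{-2\alpha}|\Delta u_0|^2+\int|x|^{2\alpha+2}|\nabla u_0|^2+(N-2)\|\nabla u_0\|_2^2$. This exceeds the radial deficit by $(2N+4\alpha)\|\nabla u_0\|_2^2$. A quick check confirms it: $\delta_1(G)=0$, yet $\Delta G+r^{2\alpha+1}G'=-(N+2\alpha)r^{2\alpha}G\neq0$. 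The kernel of your operator is $u_0'\propto r^{1-N}G$, which is singular at the origin, not $G'$. Putting the ground state on $u_0$ ($u_0=Gh$) does not help either. $\delta_1$ involves second derivatives of $u_0$ and does not become a first-order Dirichlet form in $h$.

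The missing device is the paper's substitution $w_0=u_0'/r^{2\alpha+1}$. From $\Delta u_0=r^{2\alpha+1}w_0'+(N+2\alpha)r^{2\alpha}w_0$, the cross term integrates by parts and cancels exactly the square of the second summand. Hence $\int|x|^{-2\alpha}|\Delta u_0|^2=\int_0^\infty|w_0'|^2r^{N+2\alpha+1}\mathrm{d}r$, and the radial deficit is $\int_0^\infty|w_0'+r^{2\alpha+1}w_0|^2r^{N+2\alpha+1}\mathrm{d}r$. In your notation, the correct square has $u_0''-\frac{2\alpha+1}{r}u_0'$ in place of $\Delta u_0$.

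The ground state must then be factored out of $w_0$, not $u_0$. With $w_0=Gh$, the deficit is $\int_0^\infty|h'|^2G^2r^{N+2\alpha+1}\mathrm{d}r$ and the distance is $\inf_c\int_0^\infty|h-c|^2G^2r^{N+4\alpha+1}\mathrm{d}r$. Only at this point does your Laguerre gap yield $4(\alpha+1)$, equivalently Corollary \ref{coro-5.2} with $m=\frac{N+4\alpha+2}{\alpha+1}$. Equality holds iff $h=a_0+a_1r^{2\alpha+2}$.

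Smaller points:
\begin{itemize}
\item In case (1), the ${}_1F_1$ parameter is not selected by Gaussian decay. It is fixed by taking $K=C(N,\alpha,1)$, which kills the zeroth-order remainder in the completed square of Theorem \ref{thm-1}'s proof. The second Kummer solution is discarded because it is singular at $0$. In fact the optimizer decays only polynomially, since $b-a=\frac{2-C(N,\alpha,1)}{4\alpha+4}>0$.
\item Your threshold algebra does close. We have $(N+6\alpha+4)^2-(N+2\alpha)^2-4(N-1)=4\bigl(8\alpha^2+(2N+12)\alpha+N+5\bigr)$, with roots $\frac{-(N+6)\pm\sqrt{N^2+4N-4}}{8}$, and the smaller root lies below $-1$.
\item Your remark that sharpness should come from attained equality cases is correct and slightly sharper than the paper. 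The paper's contradiction argument via $C(N,\alpha,m)$ only covers the case $C(N,\alpha)=C(N,\alpha,1)$.
\item Your remark that at the borderline $\alpha$ both families are optimizers (and, by the mode-wise decoupling, so are their sums) is also correct. The paper's ``if and only if'' in (1) omits the radial family when the two constants coincide.
\end{itemize}
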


Now, let
\begin{equation*}
\delta_{2}(u):=\left(  \int_{\mathbb{R}^{N}}
\frac{|\Delta u|^{2}}{|x|^{2\alpha}}
\mathrm{d}x\right)
^{\frac{1}{2}}\left(  \int_{\mathbb{R}^{N}}
\left|x\right|^{2\alpha+2}
|\nabla u|^{2}
\mathrm{d}x\right)^{\frac{1}{2}}
-\dfrac{N+4\alpha+2}{2}
\int_{\mathbb{R}^{N}}
|\nabla u|^{2}\mathrm{d}x
\end{equation*}
be the second-order weighted (scale invariant) HUP deficit. Based on Theorem \ref{thm-2}, by the scaling argument, we obtain the following stability version of the second-order weighted (scale invariant) HUP.

\begin{theorem}\label{thm-3}
Let $N\geq2$ and $\alpha>-1$, for all $u\in X$, we have
\begin{equation*}
\delta_{2}(u)
\geq\dfrac{C(N,\alpha)}{2}
\inf_{u^{\ast}\in E_{SHUP}}
\left\|\nabla(u
-u^{\ast})\right\|_{2}^{2},
\end{equation*}
where $C(N,\alpha)/2$ is the sharp constant of the above inequality.
\end{theorem}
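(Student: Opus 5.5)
We derive Theorem \ref{thm-3} from Theorem \ref{thm-2} by a scaling argument, following the scheme of \cite{Huang25} for the case $\alpha=0$.

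\emph{Setup.} Fix $u\in X\setminus\{0\}$ and write
\[
A=\int_{\mathbb{R}^N}\frac{|\Delta u|^2}{|x|^{2\alpha}}\,\mathrm{d}x,\qquad
B=\int_{\mathbb{R}^N}|x|^{2\alpha+2}|\nabla u|^2\,\mathrm{d}x,\qquad
P=\int_{\mathbb{R}^N}|\nabla u|^2\,\mathrm{d}x.
\]
For $\lambda>0$ set $u_\lambda(x)=\lambda^{\beta}u(\lambda x)$. Choose $\beta$ so that $P$ is invariant, i.e.\ $2\beta+2-N=0$. A change of variables then gives
\[
A(u_\lambda)=\lambda^{2\alpha+2}A,\qquad B(u_\lambda)=\lambda^{-(2\alpha+2)}B,
\]
while $\delta_2(u_\lambda)=\delta_2(u)$. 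The identity is valid because the weights $|x|^{-2\alpha}$ and $|x|^{2\alpha+2}$ are homogeneous of opposite degrees relative to the Dirichlet term. Pick $\lambda_0$ with $\lambda_0^{2\alpha+2}A=\lambda_0^{-(2\alpha+2)}B$. For $v=u_{\lambda_0}$ this gives $A(v)=B(v)=\sqrt{AB}$, and hence
\[
\delta_1(v)=2\sqrt{AB}-(N+4\alpha+2)P=2\,\delta_2(u).
\]

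\emph{Lower bound.} Apply \eqref{9.22-1} to $v$. This yields $c$ with
\[
2\delta_2(u)\ge C(N,\alpha)\,\|\nabla(v-cG)\|_2^2,\qquad G(x)=\exp\!\Big(-\frac{|x|^{2\alpha+2}}{2\alpha+2}\Big)
\]
(the infimum over $c$ is attained, being a quadratic minimization). Undo the scaling: $\|\nabla(v-cG)\|_2=\|\nabla(u-w)\|_2$ with
\[
w(x)=c\,\lambda_0^{-\beta}\,G(x/\lambda_0)=c\,\lambda_0^{-\beta}\exp\!\Big(-\frac{\lambda_0^{-(2\alpha+2)}}{2\alpha+2}|x|^{2\alpha+2}\Big).
\]
This $w$ lies in $E_{SHUP}$, with $a=c\lambda_0^{-\beta}$ and $b=\lambda_0^{-(2\alpha+2)}>0$. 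So $\delta_2(u)\ge \frac{C(N,\alpha)}{2}\inf_{u^*\in E_{SHUP}}\|\nabla(u-u^*)\|_2^2$. The key point is that $E_{SHUP}$ is closed under the scaling $u\mapsto u_\lambda$ and under multiplication by constants, which is exactly how the one-parameter family $cG$ generates it.

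\emph{Sharpness.} Take an extremal $u_0$ of Theorem \ref{thm-2} in the case relevant to $\alpha$, and consider $u_\varepsilon=G+\varepsilon u_0$. The aim is to show that both sides agree to leading order $\varepsilon^2$ after dividing by $C(N,\alpha)/2$. One could instead use the extremal of \eqref{9.18-3} in dimension $N+2$ combined with the first spherical harmonic, but perturbing around $G$ is more direct.

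\begin{enumerate}
\item[(i)] \emph{Choice of $u_0$.} We may take $u_0$ orthogonal to $G$ in the $\nabla$-inner product. Then $c=1$ is optimal in \eqref{9.22-1} for $u_\varepsilon$, and $\delta_1(u_\varepsilon)=\varepsilon^2 C(N,\alpha)\|\nabla u_0\|_2^2$, because the first variation of $\delta_1$ vanishes at $G$.
\item[(ii)] \emph{Balanced norms.} At $G$ we have $A(G)=B(G)$. Expanding $\sqrt{A(u_\varepsilon)B(u_\varepsilon)}$ gives
\[
2\delta_2(u_\varepsilon)=\delta_1(u_\varepsilon)-\big(\sqrt{A}-\sqrt{B}\big)^2=\delta_1(u_\varepsilon)+O(\varepsilon^2)\cdot(\text{first-order imbalance})^2,
\]
and the imbalance is only of size $O(\varepsilon)$ in the square roots, so the correction is $O(\varepsilon^2)$. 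To make it $o(\varepsilon^2)$, use that $u_0$ can be chosen with $A'(G)[u_0]=B'(G)[u_0]$. In case (1), $u_0$ is nonradial, of the form $|x|\,{}_1F_1(\cdots)\phi_1$, so all cross terms with the radial $G$ vanish by orthogonality of spherical harmonics. In case (2), one must verify the condition using the explicit form of $u_0$, possibly after adding a multiple of the scaling generator $\partial_\lambda G_\lambda$, which keeps $u_0$ inside the extremal family.
\item[(iii)] \emph{Matching the distance.} Show that $\inf_{u^*\in E_{SHUP}}\|\nabla(u_\varepsilon-u^*)\|_2^2=\varepsilon^2\|\nabla u_0\|_2^2+o(\varepsilon^2)$. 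This requires $u_0$ to be orthogonal to the tangent space of $E_{SHUP}$ at $G$, which is spanned by $G$ and $\partial_b G$.
\end{enumerate}

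\emph{Main obstacle.} Step (ii) for case (2), together with the orthogonality to $\partial_b G$ needed in (iii). The explicit optimizer there is radial, so it is not automatically orthogonal to the tangent directions of $E_{SHUP}$, and one must check that it can be corrected without lowering the Rayleigh quotient below $4(\alpha+1)$.
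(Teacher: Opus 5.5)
Your lower bound is exactly the paper's argument. You rescale by $\lambda^{\frac{N-2}{2}}u(\lambda x)$ and choose $\lambda$ so that the two weighted terms coincide, which makes $\delta_1$ of the rescaled function equal to $2\delta_2(u)$. You then apply \eqref{9.22-1} and undo the scaling, using that $E_{SHUP}$ is the dilation orbit of $\{cG\}$ with $G=\exp(-\frac{|x|^{2\alpha+2}}{2\alpha+2})$. For sharpness the paper only remarks that it follows from the sharpness in Theorem \ref{thm-2}. In case (1) your perturbation works, but $G$ is not needed. The near-extremals $u_j=r\,v_j(r)\phi_1(\sigma)$ from the proof of Theorem \ref{thm-2} are $\nabla$-orthogonal to every radial function, so $\inf_{u^*\in E_{SHUP}}\|\nabla(u_j-u^*)\|_2^2=\|\nabla u_j\|_2^2$. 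Together with $\delta_2\le\frac12\delta_1$ (AM--GM), this gives ratios tending to $C(N,\alpha,1)/2$.

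\textbf{Case (2) is a genuine gap, and it cannot be closed.} Since $x\cdot\nabla G=-|x|^{2\alpha+2}G$, the extremals of Theorem \ref{thm-2}(2) are $\frac{2a_1}{\alpha+1}\,x\cdot\nabla G-(4a_1+a_0)G$ plus a constant. They therefore lie entirely in the tangent space $T=\mathrm{span}\{G,\,x\cdot\nabla G\}$ of $E_{SHUP}$ at $G$. The orthogonality to $T$ you need in (iii) forces $u_0=0$; adding multiples of the scaling generator only moves $u_0$ within $T$. Moreover $G+\varepsilon u_0$ is $O(\varepsilon^2)$-close to $E_{SHUP}$, so your order-$\varepsilon^2$ comparison reduces to $0=0$.

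In fact the sharpness claim appears to be false for $-1<\alpha<\frac{\sqrt{N^2+4N-4}-N-6}{8}$. For radial $u$ write $t=\frac{r^{2\alpha+2}}{2\alpha+2}$, $u'(r)=r^{2\alpha+1}f(t)e^{-t}$ and $\gamma=\frac{N+2\alpha}{2\alpha+2}$. Up to a common factor, $\|\nabla u\|_2^2=\int_0^\infty f^2e^{-2t}t^{\gamma}\,\mathrm{d}t$ and $\delta_1(u)=(2\alpha+2)\int_0^\infty|\partial_t f|^2e^{-2t}t^{\gamma+1}\,\mathrm{d}t$. This is a Laguerre form whose levels relative to $\|\nabla u\|_2^2$ are $4(\alpha+1)j$, $j\in\mathbb{N}$. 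The level $j=1$ is spanned by $\omega_*=\frac{N-2}{2}G+x\cdot\nabla G$ (that is, $f=t-\frac{\gamma+1}{2}$), which is the generator of the very dilation you use to pass to Theorem \ref{thm-3}. It is balanced, so $\delta_2(\omega_*)=2(\alpha+1)\|\nabla\omega_*\|_2^2$. With $G_b=\exp(-\frac{b}{2\alpha+2}|x|^{2\alpha+2})$, however,
\[
\sup_{b>0}\frac{(\int\nabla\omega_*\cdot\nabla G_b\,\mathrm{d}x)^2}{\|\nabla\omega_*\|_2^2\|\nabla G_b\|_2^2}=\sup_{b>0}\frac{(\gamma+1)(1-b)^2(4b)^{\gamma+1}}{(1+b)^{2\gamma+4}}=\Bigl(\frac{\gamma+1}{\gamma+2}\Bigr)^{\gamma+2}=:\kappa>0,
\]
so $\mathrm{dist}(\omega_*,E_{SHUP})^2=(1-\kappa)\|\nabla\omega_*\|_2^2$.

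Now suppose $\delta_2(u_j)/\mathrm{dist}(u_j,E_{SHUP})^2\to2(\alpha+1)$. Replace $u_j$ by its balanced rescaling and normalize $\inf_c\|\nabla(u_j-cG)\|_2=1$. Then one needs both $\delta_1(u_j)\to4(\alpha+1)$ and $\mathrm{dist}(u_j,E_{SHUP})\to1$.
\begin{enumerate}
\item[(a)] The first condition forces the part of $u_j$ orthogonal to $G$ to converge to $\pm\omega_*/\|\nabla\omega_*\|_2$. Nonradial modes cost at least $C(N,\alpha,1)>4(\alpha+1)$, and higher radial modes at least $8(\alpha+1)$.
\item[(b)] Balancing then forces the $G$-coefficient to $0$. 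The reason is that $A(G,h)-B(G,h)$, the polarized weighted forms, is a nonzero multiple of $\int\nabla h\cdot\nabla\omega_*\,\mathrm{d}x$.
\item[(c)] Hence $\mathrm{dist}(u_j,E_{SHUP})^2\to1-\kappa<1$, a contradiction.
\end{enumerate}
So in case (2) the best constant of Theorem \ref{thm-3} is strictly larger than $C(N,\alpha)/2=2(\alpha+1)$. The sharpness part of the statement, and the paper's one-line justification of it, hold only in case (1).
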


As a consequence of Theorem \ref{thm-3}, we obtain the following stability result.

\begin{theorem}\label{thm-4}
Let $N\geq2$ and $\alpha>-1$, for all $u\in X$, we have
\begin{equation}\label{cdtStab}
\delta_{2}(u)
\geq\dfrac{C(N,\alpha)}{4}
\inf_{u^{\ast}\in E_{SHUP}}
\left\{\left\|
\nabla(u-u^{\ast})\right\|_{2}^{2}:
\left\|\nabla u\right\|_{2}^{2}
=\left\|\nabla u^{\ast}
\right\|_{2}^{2}\right\},
\end{equation}
where $C(N,\alpha)$ is defined by \eqref{defcna}.
\end{theorem}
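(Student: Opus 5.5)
Theorem \ref{thm-4} will follow from Theorem \ref{thm-3} by comparing the unconstrained distance with the distance restricted to the sphere $\|\nabla u^*\|_2=\|\nabla u\|_2$. The point is that constraining the norm costs at most a factor $2$.

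Fix $u\in X$, $u\neq0$, and write $A=\|\nabla u\|_2$. Let $u^*\in E_{SHUP}$ with $u^*\neq0$; the infimum in Theorem \ref{thm-3} can be taken over such $u^*$, or approximated by them, since $0$ is a limit of elements of $E_{SHUP}$. Note that $E_{SHUP}$ is invariant under multiplication by nonzero real constants, because $a\in\mathbb{R}$ is free. Set $B=\|\nabla u^*\|_2>0$ and $v^*=\frac{A}{B}u^*\in E_{SHUP}$, so that $\|\nabla v^*\|_2=A$. It then suffices to prove, in the Hilbert space $L^2(\mathbb{R}^N;\mathbb{R}^N)$ with $f=\nabla u$ and $g=\nabla u^*$, the elementary inequality
\[
\left\|f-\tfrac{\|f\|}{\|g\|}g\right\|^2\le 2\|f-g\|^2 .
\]

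To verify it, put $t=\langle f,g\rangle/(\|f\|\|g\|)\in[-1,1]$. The left side equals $2A^2(1-t)$, while the right side equals $2(A^2+B^2-2ABt)$. The difference is
\[
2\bigl(B^2-2ABt+A^2t\bigr)
=2\bigl((B-At)^2+A^2t(1-t)\bigr),
\]
which is clearly nonnegative when $t\in[0,1]$. When $t<0$, a direct bound suffices: the left side is at most $2A^2(1-t)$ and the right side is at least $2(A^2-2ABt)\ge 2A^2(1-t)$ as long as $B\ge A/2$. The remaining case $B<A/2$ needs a separate argument, so I instead plan to use a cleaner route that handles all cases at once.

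The cleaner route is to apply the triangle inequality and then $(a+b)^2\le 2a^2+2b^2$:
\[
\|f-v\|\le\|f-g\|+\|g-v\|,\qquad v=\tfrac{A}{B}g,
\]
where
\[
\|g-v\|=|B-A|=\bigl|\|g\|-\|f\|\bigr|\le\|f-g\| .
\]
This gives $\|f-v\|\le 2\|f-g\|$, hence only $\|f-v\|^2\le 4\|f-g\|^2$, which loses a factor $4$. That is too weak, since we need the constant $C(N,\alpha)/4$ starting from $C(N,\alpha)/2$, i.e. a loss of exactly $2$.

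So I return to the exact computation and, for $t<0$, choose $v=-\frac{A}{B}g$ instead; this is still in $E_{SHUP}$ by the sign freedom in $a$. Then the left side becomes $2A^2(1+t)\le 2A^2\le 2A^2-4ABt+2B^2$ because $t<0$. Combining the two cases yields
\[
\inf\bigl\{\|\nabla(u-v)\|_2^2 : v\in E_{SHUP},\ \|\nabla v\|_2=\|\nabla u\|_2\bigr\}\le 2\,\|\nabla(u-u^*)\|_2^2
\]
for every $u^*\in E_{SHUP}$. Taking the infimum over $u^*$ and inserting the result into Theorem \ref{thm-3} gives \eqref{cdtStab}.

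The only delicate points are the sign case, handled by the symmetry $a\mapsto -a$ of $E_{SHUP}$, and the degenerate case $u^*=0$. In that case the right side is $2A^2$, and any $v$ with $\|\nabla v\|_2=A$ satisfies $\|\nabla(u\mp v)\|_2^2=2A^2\mp 2\langle\nabla u,\nabla v\rangle$, so one of the two choices is at most $2A^2$.
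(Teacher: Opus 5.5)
Your argument is correct, and it reaches Theorem \ref{thm-4} by a route organized differently from the paper's.

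\textbf{The paper's route.} It normalizes $\|\nabla u\|_2=1$, uses Lemma \ref{PropOfE} to obtain a minimizer $v\in E_{SHUP}$ of the unconstrained distance, and then splits according to the size of the deficit.
\begin{itemize}
\item If $\delta_2(u)<C(N,\alpha)/2$, Theorem \ref{thm-3} forces $\|\nabla(u-v)\|_2^2<1$, hence $\int_{\mathbb{R}^N}\nabla u\cdot\nabla v\,\mathrm{d}x>0$. The rescaled $w=v/\|\nabla v\|_2$ then satisfies $\|\nabla(u-w)\|_2^2\le 2\|\nabla(u-v)\|_2^2$ via \eqref{9.19-12}.
\item If $\delta_2(u)\ge C(N,\alpha)/2$, the parallelogram law applied to $\pm u^*$ bounds the constrained infimum by $2$.
\end{itemize}

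\textbf{Your route.} You prove directly that for every $u^*\in E_{SHUP}$ there is $v\in E_{SHUP}$ with $\|\nabla v\|_2=\|\nabla u\|_2$ and $\|\nabla(u-v)\|_2^2\le 2\|\nabla(u-u^*)\|_2^2$. You split on the sign of $t=\langle\nabla u,\nabla u^*\rangle/(AB)$ rather than on the size of $\delta_2(u)$.
\begin{itemize}
\item For $t\ge0$, your identity $B^2-2ABt+A^2t=(B-At)^2+A^2t(1-t)$ is, with $A=1$ and $B=1/\lambda$, exactly equivalent to the paper's \eqref{9.19-12}.
\item For $t<0$, the sign flip $a\mapsto-a$ plays the role of the paper's Case 2.
\end{itemize}

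\textbf{What your version buys.} No minimizer is needed, so Lemma \ref{PropOfE} and its compactness argument become unnecessary. The factor-$2$ comparison also holds unconditionally, independent of $\delta_2(u)$, so Theorem \ref{thm-4} becomes an immediate corollary of Theorem \ref{thm-3}. The paper's version follows the standard template from the earlier $\alpha=0$ literature, but it pays for that with the existence lemma.

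\textbf{Presentation fixes.}
\begin{itemize}
\item Delete the abandoned first attempt for $t<0$ and the triangle-inequality detour.
\item Note that $0\in E_{SHUP}$ itself (take $a=0$); it is not merely a limit of elements of $E_{SHUP}$.
\item Dispose of the trivial case $\|\nabla u\|_2=0$ explicitly: there the constrained infimum is $0$.
\end{itemize}
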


\begin{remark}
\rm
Compared our main results with those of \cite{Huang25,Do26-2,Duong25-2}, our advantage lies in the following several aspects.
\begin{enumerate}
\setlength{\itemsep}{0pt}
\setlength{\parsep}{0pt}
\setlength{\parskip}{2pt}

\item[(1)]
Compared with \cite{Huang25,Do26-2,Duong25-2}, in which authors investigated the stability of the second-order HUP, we establish the sharp stability of the second-order weighted HUP, namely, when $\alpha=0$, our main  results reduces into those of \cite{Huang25,Do26-2,Duong25-2}.

\item[(2)]
We also provide the explicit value of the sharp stability constants, and the sufficient and necessary condition for the attainability of the sharp stability constants.

\end{enumerate}
\end{remark}

Now, we briefly state main difficulties and strategy during the proof of our main results. The proof of Theorems \ref{thm-3} and \ref{thm-4} heavily depended on the result of Theorem \ref{thm-2}, thus it is sufficient to consider Theorem \ref{thm-2}.

Our approach is different from that of \cite{Huang25,Do26-2,Duong25-2}. More precisely, Duong and Nguyen in \cite{Duong25-2} apply the following identity
\begin{align*}
&\int_{\mathbb{R}^{N}}
|\Delta u|^{2}\mathrm{d}x
+\int_{\mathbb{R}^{N}}
\left|x\right|^{2}
|\nabla u|^{2}\mathrm{d}x
-\left(N+2\right)
\int_{\mathbb{R}^{N}}
|\nabla u|^{2}\mathrm{d}x
\\&\quad=
\int_{\mathbb{R}^{N}}
\left\|\nabla^2 v
-x\otimes\nabla v
\right\|^{2}_{HS}
e^{-|x|^2}\mathrm{d}x,
\end{align*}
where $v=ue^{\frac{|x|^2}{2}}$,  $\nabla^2v$ denotes the Hessian matrix of $v$, $x\otimes\nabla v$ is the matrix $(x_i\partial_jv)_{i,j}$ and $\|A\|_{HS}$ is the Hilbert-Schmidt norm of the matrix $A$, then using spectral analysis of the Ornstein-Uhlenbeck operator associated with the Gaussian-type measure and Hermite polynomials to consider the right side of the above identity. However, the process is very complicated due to the term $\left\|\nabla^2 v
-x\otimes\nabla v
\right\|^{2}_{HS}$, and also fail to obtain the sharpness of the stability constants. Subsequently, Do, Lam and Lu in \cite{Do26-2} apply spherical harmonics decomposition and harmonic analysis to establish sharp stability of the second-order HUP \eqref{sUP-2}. The key idea is to apply Fourier transform
\begin{equation}\label{9.23-2}
\int_{\mathbb{R}^{N+2}}
|\Delta^{\frac{\iota}{2}} u|^{2}\mathrm{d}x
=\frac{|\mathbb{S}^{N+1}|}
{|\mathbb{S}^{N-1}|}
\int_{\mathbb{R}^{N}}
|\Delta^{\frac{\iota+1}{2}} u|^{2}\mathrm{d}x
\end{equation}
to convert the second-order HUP on $\mathbb{R}^{N+2k}$ to the first-order HUP on  $\mathbb{R}^{N+2k+2}$. However, the explicit value of the optimal stability constant has not been provided in \cite{Do26-2}. Recently, Huang and Ye in \cite{Huang25} give the explicit value of the sharp stability constant and also provide optimizers of the sharp stability constant.

In this paper, we also apply the standard spherical harmonics decomposition to \eqref{9.22-1}, and rewrite
\begin{equation*}
u(x)=u(r\sigma)
=\sum_{k=0}^{\infty}
r^{k}v_{k}(r)
\phi_{k}(\sigma).
\end{equation*}
However, since the presence of the weight $|x|^{2\alpha}$, the above Fourier transform \eqref{9.23-2} is not valid, so we apply the transformation $w_0(r)
=v_0^{\prime}(r)/r^{2\alpha+1}$ (for $\alpha>-1$) to obtain
\begin{align*}
\int_{\mathbb{R}^{N}}
\frac{|\Delta v_0(|x|)|^{2}}
{|x|^{2\alpha}}\mathrm{d}x
&=\int_0^\infty
|w'_0|^{2}r^{N+2\alpha+1}
\mathrm{d}x,\\
\int_{\mathbb{R}^{N}}
|x|^{2\alpha+2}|\nabla u|^{2}
\mathrm{d}x
&=\int_0^\infty
|w_0|^{2}
r^{N+6\alpha+3}
\mathrm{d}r,\\
\int_{\mathbb{R}^{N}}
\frac{|\Delta v_0(|x|)|^{2}}
{|x|^{2\alpha}}\mathrm{d}x
&=\int_0^\infty
|w_0|^{2}r^{N+4\alpha+1}
\mathrm{d}r.
\end{align*}
In view of this, \eqref{9.22-1} is equivalent to
\begin{align*}
&\int_0^\infty
|w'_0|^{2}
r^{N+2\alpha+1}\mathrm{d}r
+\int_0^\infty
|w_0|^{2}
r^{N+6\alpha+3}
\mathrm{d}r
-(N+4\alpha+2)
\int_0^\infty
|w_0|^{2}r^{N+4\alpha+1}
\mathrm{d}r
\nonumber\\&\qquad
+\sum_{k=1}^{\infty}
\int_{\mathbb{R}^{N+2k}}
\frac{|\Delta v_k|^{2}}
{|x|^{2\alpha}}
\mathrm{d}x
+\sum_{k=1}^{\infty}
\int_{\mathbb{R}^{N+2k}}
\left[|x|^{2\alpha+2}
|\nabla v_k|^{2}
-2(\alpha+1)k
|x|^{2\alpha}|v_k|^{2}\right]
\mathrm{d}x
\nonumber\\&\qquad
-[N+4\alpha+2+C(N,\alpha)]
\sum_{k=1}^{\infty}
\int_{\mathbb{R}^{N+2k}}
|\nabla v_k|^{2}\mathrm{d}x
\\&\quad\ge
C(N,\alpha)
\inf_{c}\int_0^\infty
\left|w_0-c
\exp\left(-\frac{r^{2\alpha+2}}
{2\alpha+2}\right)\right|^2
r^{N+4\alpha+1}\mathrm{d}r.
\end{align*}
We then consider the case $k=0$ and the case $k\ge1$ separately. More precisely, we apply Gaussian-type Poincar\'{e} inequalities (see Corollary \ref{coro-5.2} below) to handle the case $k=0$:
\begin{align*}
&\int_0^\infty
|w'_0|^{2}
r^{N+2\alpha+1}\mathrm{d}r
+\int_0^\infty
|w_0|^{2}
r^{N+6\alpha+3}
\mathrm{d}r
-(N+4\alpha+2)
\int_0^\infty
|w_0|^{2}r^{N+4\alpha+1}
\mathrm{d}r
\\&\quad\ge
C(N,\alpha)
\inf_{c}\int_0^\infty
\left|w_0-c
\exp\left(-\frac{r^{2\alpha+2}}
{2\alpha+2}\right)\right|^2
r^{N+4\alpha+1}\mathrm{d}r.
\end{align*}
We define a deficit  $C(N,\alpha,k)$ and consider some properties of it to handle the case $k\ge1$:
\begin{align*}
&\int_{\mathbb{R}^{N+2k}}
\frac{|\Delta v_k|^{2}}
{|x|^{2\alpha}}
\mathrm{d}x
+\int_{\mathbb{R}^{N+2k}}
\left[|x|^{2\alpha+2}
|\nabla v_k|^{2}
-2(\alpha+1)k
|x|^{2\alpha}|v_k|^{2}\right]
\mathrm{d}x
\nonumber\\&\quad
\ge[N+4\alpha+2+C(N,\alpha)]
\int_{\mathbb{R}^{N+2k}}
|\nabla v_k|^{2}\mathrm{d}x.
\end{align*}
More importantly, inspired by \cite{Huang25}, applying the change of variables to Kummer's ODEs yields explicit value of the sharp stability constants $C(N,\alpha)$, we also provide the sufficient and necessary condition for the attainability of the sharp stability constants $C(N,\alpha)$.

\subsection{Structure of the rest of this paper}\label{sect-1.3}

In Section \ref{sect-2}, we provide some preliminary facts, including some notations and the spherical harmonics decomposition technique. In Section \ref{sect-4}, we consider stability estimates of the second-order weighted HUP, and prove Theorems \ref{thm-1}, \ref{thm-2}, \ref{thm-3} and \ref{thm-4}.

\section{Preliminaries}\label{sect-2}

We start with the following notations which will be used frequently in this paper.
\begin{itemize}
\setlength{\itemsep}{0pt}
\setlength{\parsep}{0pt}
\setlength{\parskip}{2pt}

\item
$\mathbb{N}:=\{0,1,2,\cdots\}$ denotes the set which includes all natural numbers.

\item
$\mathbb{N}^+:=\{1,2,\cdots\}$ denotes the set which includes all positive integers.

\item
For all $p\in[1,\infty)$, $L^p(\mathbb{R}^N)$ is the usual Lebesgue space with the norm
\[
\|u\|_p:=\left(\int_{\mathbb{R}^N}
|u|^p\mathrm{d}x\right)^{\frac{1}{p}}.
\]

\end{itemize}

The main idea of this paper is to adapt the well-known technique of decomposing a function $u$ into spherical harmonicas, which is a useful method (see \cite{Cazacu20,Cazacu22,Duong25-2,
Huang25,Tertikas07,Vazquez00,Do26-2} and the references therein).
For $N\ge2$, we apply the coordinate
transformation $x\in\mathbb{R}^N
\mapsto(r,\sigma)
\in(0,\infty)\times
\mathbb{S}^{N-1}$, and then for each
$u\in C^\infty_c(\mathbb{R}^N)$,
\begin{equation*}
u(x)=u(r\sigma)
=\sum^\infty_{k=0}
u_k(r)\phi_k(\sigma),
\end{equation*}
where $\phi_k$ are the orthonormal
eigenfunctions of the Laplace-Beltrami
operator on $\mathbb{S}^{N-1}$ with the corresponding eigenvalue $c_k=k(N+k-2)$, namely, $-\Delta_{\mathbb{S}^{N-1}}\phi_k
=c_k\phi_k$ with $k\in\mathbb{N}$.
The Fourier coefficients $\{u_k\}_k$ belong to
$C_c^\infty([0,\infty))$ and satisfy
$u_{k}(r)=O(r^k)$ as $r\to 0$, which
allows us to consider the change of variables
\begin{equation*}
u_{k}(r)=r^kv_k(r),
\end{equation*}
where $v_k\in C^\infty_c([0,\infty))$
(see \cite[page 418]{Tertikas07} for more details). Thus, the function $u$ can be represented by the following form
\begin{equation*}
u(x)=u(r\sigma)
=\sum_{k=0}^{\infty}u_{k}(r)
\phi_{k}(\sigma)
=\sum_{k=0}^{\infty}r^{k}v_{k}(r)
\phi_{k}(\sigma).
\end{equation*}
Then, direct calculations together with \cite[(2.3)--(2.5)]{Duong25-2}  imply the following identities
\begin{align}\label{9.12-2}
\int_{\mathbb{R}^{N}}
|\nabla u|^{2}\mathrm{d}x
&=\sum_{k=0}^{\infty}
\int_{\mathbb{R}^{N+2k}}
\left|\nabla v_{k}(|x|)\right|^{2}
\mathrm{d}x
\nonumber\\&=\int_{\mathbb{R}^{N}}
\left|\nabla v_0(|x|)\right|^{2}
\mathrm{d}x
+\sum_{k=1}^{\infty}
\int_{\mathbb{R}^{N+2k}}
\left|\nabla v_{k}(|x|)\right|^{2}
\mathrm{d}x
\nonumber\\&=
\int_{0}^{\infty}
\left|v_0'\right|^{2} r^{N-1}\mathrm{d}r
+\sum_{k=1}^{\infty}
\int_{\mathbb{R}^{N+2k}}
\left|\nabla v_{k}(|x|)\right|^{2}
\mathrm{d}x
\nonumber\\&=
\int_{0}^{\infty}
\left|w_0\right|^{2}
r^{N+4\alpha+1}\mathrm{d}r
+\sum_{k=1}^{\infty}
\int_{\mathbb{R}^{N+2k}}
\left|\nabla v_{k}(|x|)\right|^{2}
\mathrm{d}x,
\\
\int_{\mathbb{R}^{N}}
\frac{|\Delta u|^{2}}{|x|^{2\alpha}}\mathrm{d}x
&=\sum_{k=0}^{\infty}
\int_{\mathbb{R}^{N+2k}}
\frac{|\Delta v_k(|x|)|^{2}}
{|x|^{2\alpha}}\mathrm{d}x
\nonumber\\&=\int_{\mathbb{R}^{N}}
\frac{|\Delta v_0(|x|)|^{2}}
{|x|^{2\alpha}}\mathrm{d}x
+\sum_{k=1}^{\infty}
\int_{\mathbb{R}^{N+2k}}
\frac{|\Delta v_k(|x|)|^{2}}
{|x|^{2\alpha}}\mathrm{d}x
\nonumber\\&=\int_{0}^{\infty}
\left|v_0''
+\frac{N-1}{r}v_0'\right|^{2}
r^{N-2\alpha-1}\mathrm{d}r
+\sum_{k=1}^{\infty}
\int_{\mathbb{R}^{N+2k}}
\frac{|\Delta v_k(|x|)|^{2}}
{|x|^{2\alpha}}\mathrm{d}x
\nonumber\\&=
\int_{0}^{\infty}
\left|v_0''\right|^{2}
r^{N-2\alpha-1}\mathrm{d}r
+(2\alpha+1)(N-1)
\int_{0}^{\infty}
\left|v_0'\right|^{2}
r^{N-2\alpha-3}
\mathrm{d}r
\nonumber\\&\quad
+\sum_{k=1}^{\infty}
\int_{\mathbb{R}^{N+2k}}
\frac{|\Delta v_k(|x|)|^{2}}
{|x|^{2\alpha}}\mathrm{d}x
\nonumber\\&=\int_0^\infty
|w'_0|^{2}r^{N+2\alpha+1}
\mathrm{d}x
+\sum_{k=1}^{\infty}
\int_{\mathbb{R}^{N+2k}}
\frac{|\Delta v_k(|x|)|^{2}}
{|x|^{2\alpha}}\mathrm{d}x,\label{9.12-3}
\end{align}
and
\begin{align}\label{9.12-4}
&\int_{\mathbb{R}^{N}}
|x|^{2\alpha+2}|\nabla u|^{2}
\mathrm{d}x
\nonumber\\&\quad=%0
\sum_{k=0}^{\infty}
\int_{\mathbb{R}^{N+2k}}
\left[|x|^{2\alpha+2}
|\nabla v_k(|x|)|^{2}
-2(\alpha+1)k
|x|^{2\alpha}|v_k(|x|)|^{2}
\right]\mathrm{d}x
\nonumber\\&\quad=%2
\int_{0}^{\infty}
\left|v_0'\right|^{2}
r^{N+2\alpha+1}
\mathrm{d}r
+\sum_{k=1}^{\infty}
\int_{\mathbb{R}^{N+2k}}
\left[|x|^{2\alpha+2}
|\nabla v_k|^{2}
-2(\alpha+1)k
|x|^{2\alpha}|v_k|^{2}
\right]\mathrm{d}x
\nonumber\\&\quad=%3
\int_{0}^{\infty}
\left|w_0\right|^{2}
r^{N+6\alpha+3}
\mathrm{d}r
+\sum_{k=1}^{\infty}
\int_{\mathbb{R}^{N+2k}}
\left[|x|^{2\alpha+2}
|\nabla v_k|^{2}
-2(\alpha+1)k
|x|^{2\alpha}|v_k|^{2}
\right]\mathrm{d}x.
\end{align}
where $w_0(r)
=\dfrac{v_0'(r)}{r^{2\alpha+1}}$
 for $\alpha>-1$.

\section{Stability estimates of the second-order weighted HUP: proof of Theorem \ref{thm-1}, Theorem \ref{thm-2}, Theorem \ref{thm-3} and Theorem \ref{thm-4}}\label{sect-4}

In this section, we will establish the stability version of the second-order weighted scale invariant and scale non-invariant HUP.

\subsection{Some useful lemmas}

We first recall some elementary facts about the Kummer's confluent hypergeometric functions
\[
{_1F_1}(m;n;t)
=\sum_{j=0}^\infty
\frac{\Gamma(j+m)t^j}{\Gamma(j+n)j!},
\]
for $m,n\ge0$ and $t\in\mathbb{R}$. From \cite[pages 2-4]{Buchholz69}, the  Kummer's ODE
\begin{equation*}
tH{''}(t)
+nH'(t)
-tH'(t)
-mH(t)=0
\end{equation*}
has the following two different solutions
\begin{align*}
H_1(t)&={_1F_1}(m;n;t),\\
H_2(t)&=t^{1-n}{_1F_1}(m-n+1;2-n;t).
\end{align*}

In view of this, we use the change of variables $t\mapsto -\frac{t^{2\alpha+2}}{2\alpha+2}$ to obtain the following result.

\begin{lemma}\label{lem-3.1}
Assume that $\alpha>-1$, for $m,n\ge0$ and $t\in\mathbb{R}$, then
\begin{equation*}
t\Psi{''}(t)
+\left[(2\alpha+2)n
-(2\alpha+1)\right]\Psi'(t)
+t^{2\alpha+2}\Psi'(t)
+(2\alpha+2)m
t^{2\alpha+1}\Psi(t)=0
\end{equation*}
has two independent solutions
\begin{align*}
\Psi_1(t)&={_1F_1}
\left(m;n;
-\frac{t^{2\alpha+2}}
{2\alpha+2}\right),\\
\Psi_2(t)&
=\left(-\frac{t^{2\alpha+2}}
{2\alpha+2}\right)^{1-n}
{_1F_1}
\left(m-n+1;2-n;
-\frac{t^{2\alpha+2}}
{2\alpha+2}\right).
\end{align*}
Moreover, for all $l\in\mathbb{N}$, there holds
\begin{align}
\Psi_1^{(l)}(t)
&=O\left(t^{-(2\alpha+2)m-l}\right),
\ \
\mathrm{as}
\
t\to\infty.\label{9.19-7}
\end{align}
\end{lemma}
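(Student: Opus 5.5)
The plan is to reduce the ODE to Kummer's equation by the substitution $s=-\frac{t^{2\alpha+2}}{2\alpha+2}$ and then read off the asymptotics from the classical behaviour of ${_1F_1}$ at $-\infty$. Write $\Psi(t)=H(s)$ with $s=s(t)=-\frac{t^{2\alpha+2}}{2\alpha+2}$, so that $s'(t)=-t^{2\alpha+1}$. The chain rule gives
\begin{align*}
\Psi'(t)&=-t^{2\alpha+1}H'(s),\\
\Psi''(t)&=t^{4\alpha+2}H''(s)-(2\alpha+1)t^{2\alpha}H'(s).
\end{align*}

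Substituting these into the left-hand side of the equation in Lemma \ref{lem-3.1}, the two $(2\alpha+1)t^{2\alpha+1}H'$ contributions cancel. What remains is
\[
t^{4\alpha+3}H''-(2\alpha+2)n\,t^{2\alpha+1}H'-t^{4\alpha+3}H'+(2\alpha+2)m\,t^{2\alpha+1}H .
\]
Dividing by $-(2\alpha+2)t^{2\alpha+1}$ (nonzero for $t\neq0$, since $\alpha>-1$) and using $\frac{t^{4\alpha+3}}{-(2\alpha+2)t^{2\alpha+1}}=s$, this becomes
\[
sH''(s)+nH'(s)-sH'(s)-mH(s)=0,
\]
which is exactly Kummer's ODE. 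Hence $H_1,H_2$ from \cite{Buchholz69} produce $\Psi_1,\Psi_2$. Linear independence transfers from that of $H_1,H_2$, because $t\mapsto s$ is a diffeomorphism on $(0,\infty)$. This should be read in the generic range $n\notin\mathbb{Z}$, where $H_2$ is defined. In the degenerate integer case I would remark that $\Psi_2$ should be replaced by the usual logarithmic second solution; this does not affect $\Psi_1$, which is all that is used later.

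For \eqref{9.19-7}, as $t\to\infty$ we have $s\to-\infty$. I would use Kummer's transformation ${_1F_1}(m;n;s)=e^{s}{_1F_1}(n-m;n;-s)$ together with the standard large-argument asymptotics of ${_1F_1}(n-m;n;\cdot)$ at $+\infty$. These give ${_1F_1}(m;n;s)=\frac{\Gamma(n)}{\Gamma(n-m)}|s|^{-m}(1+o(1))$. In the exceptional case where $n-m$ is a nonpositive integer, the function is $e^{s}$ times a polynomial, which decays even faster. In all cases $F(s):={_1F_1}(m;n;s)=O(|s|^{-m})$. Applying the same bound to $F^{(j)}(s)=\frac{(m)_j}{(n)_j}{_1F_1}(m+j;n+j;s)$ gives $F^{(j)}(s)=O(|s|^{-m-j})$.

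Finally I would argue by induction on $l$ that $\Psi_1^{(l)}(t)$ is a finite linear combination of terms $t^{j(2\alpha+2)-l}F^{(j)}(s(t))$ with $0\le j\le l$. For the induction step, differentiating such a term either lowers the power of $t$ by one, or produces $-t^{2\alpha+1}F^{(j+1)}$, which gives the power $t^{(j+1)(2\alpha+2)-(l+1)}$. Each term is then $O\bigl(t^{j(2\alpha+2)-l}\,t^{-(2\alpha+2)(m+j)}\bigr)=O(t^{-(2\alpha+2)m-l})$, which proves \eqref{9.19-7}.

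The main obstacle is the uniform asymptotic bound for $F^{(j)}$ at $-\infty$, including the degenerate parameter cases. Handling it through Kummer's transformation and the derivative formula should suffice.
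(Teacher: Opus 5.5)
Your proof is correct and takes essentially the same route as the paper. The substitution $s=-\frac{t^{2\alpha+2}}{2\alpha+2}$ reduces the equation to Kummer's ODE, and the decay bound then follows from three facts: the algebraic decay of ${_1F_1}(m;n;s)$ as $s\to-\infty$, the fact that derivatives of ${_1F_1}$ are again multiples of ${_1F_1}(m+j;n+j;s)$, and an induction on $l$.

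Your version is somewhat tighter than the paper's in two places. You give the explicit inductive form $\Psi_1^{(l)}=\sum_{j\le l}c_{l,j}\,t^{j(2\alpha+2)-l}F^{(j)}(s(t))$. You also use Kummer's transformation to cover the degenerate cases $n-m\in\{0,-1,-2,\dots\}$, which the paper's asymptotic-equivalence claims gloss over, although the $O$-bound still holds there.
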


\begin{proof}
[\rm\textbf{Proof}]
Evidently, $\Psi(t)$ is smooth for $t\in\mathbb{R}$. Notice that, let $s:=-\frac{t^{2\alpha+2}}
{2\alpha+2}$,
\begin{align*}
&t\Psi''_1(t)
+\left[(2\alpha+2)n-(2\alpha+1)\right]
\Psi'_1(t)
+t^{2\alpha+2}\Psi'_1(t)
+(2\alpha+2)mt^{2\alpha+1}\Psi_1(t)
\\&\quad=%1
t^{4\alpha+3}H''_1(s)
-(2\alpha+1)t^{2\alpha+1}H'_1(s)
-\left[(2\alpha+2)n-(2\alpha+1)\right]
t^{2\alpha+1}H'_1(s)
\\&\qquad
-t^{4\alpha+3}H'_1(s)
+(2\alpha+2)mt^{2\alpha+1}H_1(s)
\\&\quad=%2
t^{2\alpha+1}\left[
t^{2\alpha+2}H''_1(s)
-(2\alpha+2)nH'_1(s)
-t^{2\alpha+2}H'_1(s)
+(2\alpha+2)mH_1(s)\right]
\\&\quad=%3
t^{2\alpha+1}\left[
-(2\alpha+2)sH''_1(s)
-(2\alpha+2)nH'_1(s)
+(2\alpha+2)sH'_1(s)
+(2\alpha+2)mH_1(s)\right]
\\&\quad=%4
-(2\alpha+2)t^{2\alpha+1}
\left[
sH''_1(s)
+nH'_1(s)
-sH'_1(s)
-mH_1(s)\right]
\\&\quad=0,
\end{align*}
using the fact that $sH''_1(s)
+nH'_1(s)
-sH'_1(s)
-mH_1(s)=0$. Similarly,
\[
t\Psi''_2(t)
+\left[(2\alpha+2)n-(2\alpha+1)\right]
\Psi'_2(t)
+t^{2\alpha+2}\Psi'_2(t)
+(2\alpha+2)mt^{2\alpha+1}\Psi_2(t)=0.
\]

As $t\to-\infty$, it follows from \cite[13.1.5]{Abramowite72} that ${_1F_1}(m;n;t)\sim (-t)^{-m}$ if $m\neq n$. Notice that $H_1^{(l)}(t)={_1F_1}(m+l;n+l;t)$ for all $l\in\mathbb{N}$ (see for instance, \cite[(8.80)]{Viola16},  \cite[13.4.9]{Abramowite72}), and then
\[
H_1^{(l)}(s)
={_1F_1}(m+l;n+l;s)
\sim t^{-(2\alpha+2)(m+l)},
\]
as $t\to\infty$. By induction, as $t\to\infty$,
\begin{align*}
\Psi'_1(t)&=
-t^{2\alpha+1}
H'_1(s)
\sim t^{-(2\alpha+2)m-1},\\%1
\Psi''_1(t)&
=t^{4\alpha+2}
H''_1(s)
-(2\alpha+1)t^{2\alpha}
H'_1(s)
\sim t^{-(2\alpha+2)m-2},\\%2
\Psi'''_1(t)&
=-t^{6\alpha+3}
H'''_1(s)
+(6\alpha+3)t^{4\alpha+1}
H''_1(s)
-2\alpha(2\alpha+1)t^{2\alpha-1}H'_1(s)
\sim t^{-(2\alpha+2)m-3},\\%3
&\vdots\\
\Psi^{(l)}_1(t)&\sim
t^{-(2\alpha+2)m-l},%l
\end{align*}
as our desired estimate \eqref{9.19-7}. The proof is completed here.
\end{proof}

We recall a sharp Gaussian-type Poincar\'{e} inequalities given by Do \emph{et al.} in \cite{Do26}, which are crucial to prove Theorem \ref{thm-2}.

\begin{lemma}
[{\rm{\!\cite[Theorem 2.1]{Do26}}}]\label{lem-5.2}
Let $m>0$, for any function $u\in W^{1,2}((0,\infty),
e^{-\frac{s^2}{2}}
s^{m-1}\mathrm{d}s)$, there holds
\begin{equation}\label{9.23-1}
\int_{0}^\infty
|u'(s)|^{2}
e^{-\frac{s^2}{2}}
s^{m-1}
\mathrm{d}s
\ge 2
\inf_{c}\int_0^\infty
|u(s)-c|^{2}
e^{-\frac{s^2}{2}}
s^{m-1}\mathrm{d}s,
\end{equation}
where the constant $2$ is sharp and is  attained if and only if the optimizer $u$ satisfies
\[
u(s)=a_0+a_1s^2.
\]
\end{lemma}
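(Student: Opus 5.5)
We plan to prove \eqref{9.23-1} by a spectral decomposition of the Laguerre-type operator naturally attached to the measure $\mathrm{d}\mu:=e^{-\frac{s^2}{2}}s^{m-1}\mathrm{d}s$ on $(0,\infty)$. Define
\[
\mathcal{L}u:=-u''-\Big(\frac{m-1}{s}-s\Big)u' .
\]
Integration by parts against $\mu$ gives $\int_0^\infty(\mathcal{L}u)\,v\,\mathrm{d}\mu=\int_0^\infty u'v'\,\mathrm{d}\mu$ for smooth $u,v$ with bounded derivatives near $0$ and at most polynomial growth. The boundary terms vanish because $s^{m-1}e^{-s^2/2}\to0$ at $\infty$, and at $0$ because $m>0$ (we require $u'(0)$ finite). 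Thus $\mathcal{L}$ is symmetric and nonnegative, and its Dirichlet form is the left-hand side of \eqref{9.23-1}.

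Next we diagonalize $\mathcal{L}$ via the substitution $t=s^2/2$. With $a:=\frac m2-1>-1$, the measure becomes a constant multiple of $t^{a}e^{-t}\,\mathrm{d}t$. A direct computation shows that $\mathcal{L}$ becomes $2\big(-t\partial_t^2-(a+1-t)\partial_t\big)$, i.e.\ twice the Laguerre operator. Hence the functions $P_j(s):=L^{(a)}_j(s^2/2)$, $j\in\mathbb{N}$, satisfy $\mathcal{L}P_j=2jP_j$. For instance, $\mathcal{L}1=0$ and $\mathcal{L}(s^2-m)=2(s^2-m)$, which one checks directly. The Laguerre polynomials $\{L_j^{(a)}\}_j$ form an orthogonal basis of $L^2(t^ae^{-t}\mathrm{d}t)$ for every $a>-1$ (polynomials are dense there, since the weight decays exponentially). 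Therefore $\{P_j\}$ is an orthogonal basis of $L^2(\mu)$, and the eigenvalues are $0<2<4<\cdots$.

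Now let $u=\sum_j c_jP_j$. The infimum over $c$ on the right of \eqref{9.23-1} is attained at $c=c_0$ (the $\mu$-mean of $u$), so the right-hand side equals $2\sum_{j\ge1}c_j^2\|P_j\|^2$. For $u$ in the form domain, the spectral theorem gives
\[
\int_0^\infty|u'|^2\,\mathrm{d}\mu=\sum_{j\ge1}2j\,c_j^2\|P_j\|^2\ge 2\sum_{j\ge1}c_j^2\|P_j\|^2 .
\]
Equality holds iff $c_j=0$ for all $j\ge2$, i.e.\ $u=c_0+c_1(s^2/2-a-1)=a_0+a_1s^2$. Conversely, every such function is an eigenfunction combination attaining equality, which gives sharpness.

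The main obstacle is the functional-analytic justification of the identity $\int|u'|^2\mathrm{d}\mu=\sum 2jc_j^2\|P_j\|^2$ for arbitrary $u\in W^{1,2}((0,\infty),\mu)$, rather than only for polynomials. We would handle this as follows. Show that smooth functions on $[0,\infty)$ with compact support (or polynomials in $s^2$) are dense in $W^{1,2}(\mu)$. Identify $W^{1,2}(\mu)$ with the form domain of the Friedrichs extension of $\mathcal{L}$, defined on this core. Pass to the limit using lower semicontinuity of both sides.

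For $0<m<2$ some care is needed at $s=0$ to check that no other self-adjoint extension is relevant. Here we would use that the $W^{1,2}(\mu)$ norm controls $u$ near $0$, so that the form is closed on the stated space and coincides with the Friedrichs form. With this in hand, the remaining steps are the explicit eigenvalue computation and the completeness of Laguerre polynomials, which are standard.
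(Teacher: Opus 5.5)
The paper does not prove this lemma. It is quoted from \cite[Theorem 2.1]{Do26} and used only through Corollary \ref{coro-5.2}, so your Laguerre argument is a self-contained proof of an imported result, and its computational core is correct. With $t=s^2/2$ and $a=\frac m2-1>-1$ one has $\mathrm{d}\mu=2^{a}t^{a}e^{-t}\,\mathrm{d}t$ and $\mathcal{L}$ is twice the Laguerre operator. The functions $P_j=L^{(a)}_j(s^2/2)$ form a complete orthogonal system in $L^2(\mu)$ with $\mathcal{L}P_j=2jP_j$. The spectral gap between $0$ and $2$ then gives both the constant and the extremals $a_0+a_1s^2$. Compared with the bare citation, this shows why the constant is $2$ for every $m>0$ and why the optimizers are exactly the constants plus the first Laguerre eigenspace.

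The functional-analytic step you single out needs two corrections, and can in fact be avoided.

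\emph{The core.} Symmetry of $\mathcal{L}$ for smooth $u,v$ with merely $u'(0)$ finite fails for $0<m\le 1$: the boundary term $u'(s)v(s)s^{m-1}e^{-s^2/2}$ does not vanish at $0$ unless $u'(0)=0$. Moreover, for $m\le2$, $m\neq1$, one has $\mathcal{L}u\notin L^2(\mu)$ whenever $u'(0)\neq0$. So $C_c^\infty([0,\infty))$ is not an admissible core, whereas functions of $s^2$ are, since then $u'(s)s^{m-1}=O(s^m)$.

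\emph{The extension.} For $0<m<2$ the origin has positive capacity, because $\int_0^1 s^{1-m}\,\mathrm{d}s<\infty$. Hence the Friedrichs extension of $\mathcal{L}$ on $C_c^\infty((0,\infty))$ is a Dirichlet-type operator with a different spectrum; for $m=1$ its eigenfunctions are the odd Hermite polynomials, with eigenvalues $1,3,5,\dots$. ``Friedrichs'' is correct only relative to the core of polynomials in $s^2$. Identifying that with the maximal form on $W^{1,2}(\mu)$ is exactly the density statement you still have to prove.

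\emph{A simpler route.} You can bypass density and self-adjoint extensions entirely. Take $u\in W^{1,2}(\mu)$, integrate by parts on $[\varepsilon,R]$, and use $(P_j'\,s^{m-1}e^{-s^2/2})'=-2jP_j\,s^{m-1}e^{-s^2/2}$. This gives $\int_0^\infty u'P_j'\,\mathrm{d}\mu=2j\int_0^\infty uP_j\,\mathrm{d}\mu$, provided the boundary terms vanish. Let $F=uP_j'\,s^{m-1}e^{-s^2/2}$. All other terms converge absolutely, so $F$ has limits at $0$ and at $\infty$. Both limits are $0$, because $\int_0^1|F|\,\frac{\mathrm{d}s}{s}\le C\int_0^1|u|\,\mathrm{d}\mu<\infty$ (using $|P_j'(s)|\le Cs$) and $\int_1^\infty|F|\,\mathrm{d}s\le\|u\|_{L^2(\mu)}\|P_j'\|_{L^2(\mu)}$.

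Taking $u=P_i$ shows that $\{P_j'\}_{j\ge1}$ is orthogonal with $\|P_j'\|^2=2j\|P_j\|^2$. Bessel's inequality then yields
\[
\int_0^\infty|u'|^2\,\mathrm{d}\mu\ \ge\ \sum_{j\ge1}2j\,c_j^2\|P_j\|^2\ \ge\ 2\sum_{j\ge1}c_j^2\|P_j\|^2=2\inf_c\int_0^\infty|u-c|^2\,\mathrm{d}\mu
\]
for every $u\in W^{1,2}(\mu)$, and equality forces $c_j=0$ for $j\ge2$. This gives the inequality and the equality cases at once, with no approximation step. A limiting argument from a dense class would by itself not give the equality cases, and ``lower semicontinuity of both sides'' is not the right hypothesis for passing an inequality to the limit.

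Since $P_j'(s)=-s\,L^{(a+1)}_{j-1}(s^2/2)$ and these functions are complete in $L^2(\mu)$, Bessel is in fact Parseval. That is precisely the density of polynomials in $s^2$ in $W^{1,2}(\mu)$ that your plan was after.
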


Inspired by \cite{Do26}, let $s=\sqrt{\frac{2}{\alpha+1}}
r^{\alpha+1}$, then we have the following corollary.

\begin{corollary}\label{coro-5.2}
Let $\alpha>-1$ and $m>0$, there holds
\begin{align*}
&\int_{0}^\infty
\left|w'(r)\right|^{2}
\exp\left(-\frac{r^{2\alpha+2}}
{\alpha+1}\right)
r^{m(\alpha+1)-2\alpha-1}\mathrm{d}r
\nonumber\\&\quad
\ge 4(\alpha+1)\inf_{c}
\int_{0}^\infty
\left|w(r)-c\right|^{2}
\exp\left(-\frac{r^{2\alpha+2}}
{\alpha+1}\right)
r^{m(\alpha+1)-1}\mathrm{d}r,
\end{align*}
where $4(\alpha+1)$ is sharp and is attained if and only if the optimizer $w$ satisfies
\[
w(r)=a_0+\frac{2a_1}{\alpha+1}r^{2\alpha+2},
\]
for $a_0,a_1\in\mathbb{R}$.
\end{corollary}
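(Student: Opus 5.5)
The plan is to obtain Corollary \ref{coro-5.2} from Lemma \ref{lem-5.2} by the substitution $s=\sqrt{\frac{2}{\alpha+1}}\,r^{\alpha+1}$. Given $w$ on $(0,\infty)$, define $u(s):=w(r(s))$ with $r=\left(\frac{(\alpha+1)s^2}{2}\right)^{\frac{1}{2\alpha+2}}$. This map is a smooth increasing bijection of $(0,\infty)$ because $\alpha+1>0$. We then transform each integral in \eqref{9.23-1} into an integral in $r$, apply the lemma, and transport its equality characterization back.

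The computations proceed as follows.
\begin{itemize}
\item The Gaussian factor: $\frac{s^2}{2}=\frac{r^{2\alpha+2}}{\alpha+1}$, so $e^{-s^2/2}=\exp\left(-\frac{r^{2\alpha+2}}{\alpha+1}\right)$.
\item The Jacobian: $\mathrm{d}s=(\alpha+1)\sqrt{\frac{2}{\alpha+1}}\,r^{\alpha}\,\mathrm{d}r$.
\item The power weight: $s^{m-1}=\left(\frac{2}{\alpha+1}\right)^{\frac{m-1}{2}}r^{(\alpha+1)(m-1)}$.
\end{itemize}
Hence $s^{m-1}\mathrm{d}s=K\,r^{m(\alpha+1)-1}\mathrm{d}r$ with $K=(\alpha+1)\left(\frac{2}{\alpha+1}\right)^{m/2}>0$. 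This gives the right-hand measure of the corollary up to the factor $K$.

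For the derivative term, $u'(s)=w'(r)\frac{\mathrm{d}r}{\mathrm{d}s}$ with $\frac{\mathrm{d}r}{\mathrm{d}s}=\left[(\alpha+1)\sqrt{\tfrac{2}{\alpha+1}}\,r^{\alpha}\right]^{-1}$. Therefore $|u'|^2=\frac{|w'|^2 r^{-2\alpha}}{2(\alpha+1)}$, and
\[
|u'(s)|^2 s^{m-1}\,\mathrm{d}s=\frac{K}{2(\alpha+1)}\,|w'(r)|^2\,r^{m(\alpha+1)-2\alpha-1}\,\mathrm{d}r.
\]
Substituting into \eqref{9.23-1} and cancelling $K$ yields the claimed inequality: the factor $\frac{1}{2(\alpha+1)}$ on the left turns the constant $2$ into $4(\alpha+1)$. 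The infimum over $c$ is unaffected, since constants in $s$ are constants in $r$. The same computation shows that $w\mapsto u$ is an isometry up to fixed positive factors between the corresponding weighted $W^{1,2}$ spaces. Consequently the admissible classes correspond, and sharpness transfers: any better constant for $w$ would give a better constant than $2$ for $u$.

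For the equality case, Lemma \ref{lem-5.2} gives equality if and only if $u(s)=a_0+a_1s^2$. Since $s^2=\frac{2}{\alpha+1}r^{2\alpha+2}$, this is exactly $w(r)=a_0+\frac{2a_1}{\alpha+1}r^{2\alpha+2}$. The only point needing care is checking that the function spaces match under the change of variables, including behavior near $r=0$ and $r=\infty$. This follows from the weighted-measure identities above, since the map is a diffeomorphism of $(0,\infty)$ and the weights transform exactly. I expect no real obstacle beyond keeping the exponents and constants straight.
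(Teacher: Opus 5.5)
Your proposal is correct and follows essentially the same route as the paper. Both use the substitution $s=\sqrt{\frac{2}{\alpha+1}}\,r^{\alpha+1}$ with $\mathrm{d}s=\sqrt{2\alpha+2}\,r^{\alpha}\,\mathrm{d}r$, obtain the same factors $2^{\frac{m-2}{2}}(\alpha+1)^{-\frac{m}{2}}$ and $2^{\frac{m}{2}}(\alpha+1)^{\frac{2-m}{2}}$ (your $K/(2(\alpha+1))$ and $K$), and transfer the equality case via $s^2=\frac{2}{\alpha+1}r^{2\alpha+2}$; your remarks on matching the weighted spaces and on transferring sharpness make explicit what the paper leaves implicit.
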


\begin{proof}
[\rm\textbf{Proof}]
Set $s=\sqrt{\frac{2}{\alpha+1}}
r^{\alpha+1}$, which implies that
$\mathrm{d}s
=\sqrt{2\alpha+2}r^{\alpha}\mathrm{d}r$.
Let us denote
\[
u\left(\sqrt{\frac{2}{\alpha+1}}
r^{\alpha+1}\right)
=w(r),
\]
and then
\[
u'\left(\sqrt{\frac{2}{\alpha+1}}
r^{\alpha+1}\right)
=\frac{1}{\sqrt{2\alpha+2}}
w'(r)r^{-\alpha}.
\]
These imply that
\begin{align*}
&\int_{0}^\infty
|u'(s)|^{2}
e^{-\frac{s^2}{2}}
s^{m-1}
\mathrm{d}s
\\&\quad=%1
\int_{0}^\infty
\left|u'\left(\sqrt{\frac{2}{\alpha+1}}
r^{\alpha+1}\right)\right|^{2}
\exp\left(-\frac{r^{2\alpha+2}}
{\alpha+1}\right)
\left(\sqrt{\frac{2}{\alpha+1}}
r^{\alpha+1}\right)^{m-1}
\mathrm{d}\left(\sqrt{\frac{2}{\alpha+1}}
r^{\alpha+1}\right)
\\&\quad=%2
\frac{1}{2\alpha+2}
\int_{0}^\infty
\left|w'(r)\right|^{2}
r^{-2\alpha}
\exp\left(-\frac{r^{2\alpha+2}}
{\alpha+1}\right)
\left(\sqrt{\frac{2}{\alpha+1}}
r^{\alpha+1}\right)^{m-1}
\sqrt{2\alpha+2}r^{\alpha}
\mathrm{d}r
\\&\quad=%3
2^{\frac{m-2}{2}}
(\alpha+1)^{-\frac{m}{2}}
\int_{0}^\infty
\left|w'(r)\right|^{2}
\exp\left(-\frac{r^{2\alpha+2}}
{\alpha+1}\right)
r^{m(\alpha+1)-2\alpha-1}\mathrm{d}r,
\end{align*}
and
\begin{align*}
&\int_0^\infty
|u(s)-c|^{2}
e^{-\frac{s^2}{2}}
s^{m-1}\mathrm{d}s
\\&\quad=%1
\int_{0}^\infty
\left|u\left(\sqrt{\frac{2}{\alpha+1}}
r^{\alpha+1}\right)-c\right|^{2}
\exp\left(-\frac{r^{2\alpha+2}}
{\alpha+1}\right)
\left(\sqrt{\frac{2}{\alpha+1}}
r^{\alpha+1}\right)^{m-1}
\mathrm{d}\left(\sqrt{\frac{2}{\alpha+1}}
r^{\alpha+1}\right)
\\&\quad=%2
\int_{0}^\infty
\left|w(r)-c\right|^{2}
\exp\left(-\frac{r^{2\alpha+2}}
{\alpha+1}\right)
\left(\sqrt{\frac{2}{\alpha+1}}
r^{\alpha+1}\right)^{m-1}
\sqrt{2\alpha+2}r^{\alpha}\mathrm{d}r
\\&\quad=%3
2^{\frac{m}{2}}
(\alpha+1)^{\frac{2-m}{2}}
\int_{0}^\infty
\left|w(r)-c\right|^{2}
\exp\left(-\frac{r^{2\alpha+2}}
{\alpha+1}\right)
r^{m(\alpha+1)-1}\mathrm{d}r.
\end{align*}
Hence, \eqref{9.23-1} is equivalent to
\begin{align*}
&2^{\frac{m-2}{2}}
(\alpha+1)^{-\frac{m}{2}}
\int_{0}^\infty
\left|w'(r)\right|^{2}
\exp\left(-\frac{r^{2\alpha+2}}
{\alpha+1}\right)
r^{m(\alpha+1)-2\alpha-1}\mathrm{d}r
\nonumber\\&\quad
\ge2^{\frac{m+2}{2}}
(\alpha+1)^{\frac{2-m}{2}}\inf_{c}
\int_{0}^\infty
\left|w(r)-c\right|^{2}
\exp\left(-\frac{r^{2\alpha+2}}
{\alpha+1}\right)
r^{m(\alpha+1)-1}\mathrm{d}r,
\end{align*}
that is,
\begin{align*}
&\int_{0}^\infty
\left|w'(r)\right|^{2}
\exp\left(-\frac{r^{2\alpha+2}}
{\alpha+1}\right)
r^{m(\alpha+1)-2\alpha-1}\mathrm{d}r
\nonumber\\&\quad
\ge 4(\alpha+1)\inf_{c}
\int_{0}^\infty
\left|w(r)-c\right|^{2}
\exp\left(-\frac{r^{2\alpha+2}}
{\alpha+1}\right)
r^{m(\alpha+1)-1}\mathrm{d}r,
\end{align*}
as our desired estimate. Here the proof is completed.
\end{proof}

Recall that
\[
E_{SHUP}=\left\{a \exp\left(-\frac{b}{2\alpha+2}
|x|^{2\alpha+2}
\right):
\
a\in\mathbb{R},
\
b>0\right\}
\]
is the set of all optimizers for the second-order weighted HUP.

\begin{lemma}\label{PropOfE}
Let $N\ge2$ and $\alpha>-1$. Then the set $E_{SHUP}$ is closed under the seminorm $\|\nabla(\cdot)\|_{2}$. Moreover, for each $u\in X$, there exists $v\in X$ such that
\begin{equation*}
\inf_{u^{\ast}\in E_{SHUP}}
\left\|
\nabla(u- u^{\ast})\right\|_{2}^{2}
=\left\|\nabla(u-v)\right\|_{2}^{2}.
\end{equation*}
\end{lemma}

\begin{proof}
[\rm\textbf{Proof}]
Let $u\in X$, assume that there exists a sequence $\{v_{j}\}_{j}
\subset E_{SHUP}$ such that
\begin{equation*}
\lim_{j\to\infty}
\|\nabla(u-v_{j})\|_{2}=0.
\end{equation*}
We will show that $u\in$ $E_{SHUP}$. If $\|\nabla u\|_{2}=0$, then $u=0$ and $u\in$ $E_{SHUP}$.
Thus, without loss of generality, we can assume that $\|\nabla u\|_{2}=1$. Using the triangle inequality, we get
\begin{align*}
\|\nabla(u-v_{j})\|_{2}
&\geq\big|\|\nabla u\|_{2}
-\|\nabla v_{j}\|_{2}\big|
=\big|1-\|\nabla v_{j}\|_{2}\big|,
\end{align*}
and then
\begin{equation*}
\dfrac{1}{2}
\leq\|\nabla v_{j}\|_{2}
\leq\dfrac{3}{2},
\end{equation*}
for $j$ large enough. Let $v_{j}=a_{j}
\exp\left(-\frac{b_{j}}
{2\alpha+2}|x|^{2\alpha+2}\right)$, a direct calculation shows that
\begin{align}\label{9.20-1}
\|\nabla v_{j}\|^{2}_2
&=%1
a_j^{2}b_j^{2}
\int_{\mathbb{R}^{N}}
|x|^{4\alpha+2}\exp\left(-\frac{b_j}
{\alpha+1}|x|^{2\alpha+2}\right)
\mathrm{d}x
\nonumber\\&=%2
a_j^{2}b_j^{\frac{2-N}{2\alpha+2}}
\int_{\mathbb{R}^{N}}
|x|^{4\alpha+2}\exp\left(-\frac{1}
{\alpha+1}|x|^{2\alpha+2}\right)
\mathrm{d}x.
\end{align}
This implies that there exist constants $M,m>0$ such that
\begin{equation}\label{relationab}
m^{2}\leq a_j^{2}
b_j^{\frac{2-N}{2\alpha+2}}
\leq M^{2},
\end{equation}
for $j$ large enough. \emph{We claim that $\{a_j\}_{j}$ and
$\{b_j\}_{j}$ are bounded}. Going if necessary to a subsequence, there exist $a\in\mathbb{R}$ and $b
\in(0,\infty)$ such that  $a_{j}\rightarrow a$ and $b_j\rightarrow b$. Let us define $v(x)=a \exp\left(-\frac{b}{2\alpha+2}
|x|^{2\alpha+2}\right)$, from the dominated convergence theorem, $\nabla v_{j}\rightarrow\nabla v$ in $L^{2}
(\mathbb{R}^{N})$. Hence, using the triangle inequality, we get
\[
\|\nabla(u-v)\|_{2}
\leq\|\nabla(u-v_{j})\|_{2}
+\|\nabla(v-v_{j})\|_{2}
\rightarrow0,
\ \
j\rightarrow\infty.
\]
Then, $u-v$ is a constant a.e. in $\mathbb{R}^{N}.$
Since $u\in X$, $u-v=0$ or $u=v=a \exp\left(-\frac{b}{2\alpha+2}
|x|^{2\alpha+2}\right)\in
E_{SHUP}$. In other words, $E_{SHUP}$ is closed under the seminorm
$\|\nabla(\cdot)\|_{2}$. Moreover, for each $u\in X$, there exists $v\in X$ such that $\inf_{u^{\ast}\in E_{SHUP}}
\left\|\nabla(u- u^{\ast})\right\|_{2}^{2}
=\left\|\nabla(u-v)\right\|_{2}^{2}$.

In view of this, to complete the proof of this lemma, it remains to show that the above claim holds. Inspired by the argument as in \cite{Duong25-2,Do26-2},  it is not difficult to verify this claim. First, it follows from \eqref{9.20-1} that
\begin{align*}
\|\nabla (u-v_{j})\|^{2}_2
&=1
+a_j^{2}
b_j^{\frac{2-N}{2\alpha+2}}
\int_{\mathbb{R}^{N}}
|x|^{4\alpha+2}\exp\left(-\frac{1}
{\alpha+1}|x|^{2\alpha+2}\right)
\mathrm{d}x
\\&\quad+2a_jb_j
\int_{\mathbb{R}^{N}}\nabla u\cdot\left(|x|^{2\alpha }x\right)
\exp\left(-\frac{b_j}{2\alpha+2}
|x|^{2\alpha+2}\right)
\mathrm{d}x
\\&=%2
1
+a_j^{2}
b_j^{\frac{2-N}{2\alpha+2}}
\int_{\mathbb{R}^{N}}
|x|^{4\alpha+2}\exp\left(-\frac{1}
{\alpha+1}|x|^{2\alpha+2}\right)
\mathrm{d}x
\\&\quad+2a_j
b_j^{\frac{1}{2\alpha+2}}
\int_{\mathbb{R}^{N}}\nabla u\cdot\left(b_j^{\frac{2\alpha+1}
{2\alpha+2}}
|x|^{2\alpha}x\right)
\exp\left(-\frac{1}{2\alpha+2}
\left|b_j^{\frac{1}{2\alpha+2}}x
\right|^{2\alpha+2}\right)
\mathrm{d}x.
\end{align*}
Up to a subsequence, we divide the proof into the following three cases:
\begin{enumerate}
\setlength{\itemsep}{0pt}
\setlength{\parsep}{0pt}
\setlength{\parskip}{2pt}

\item[(1)]
$\lim_{j \rightarrow\infty} b_j=\infty$;

\item[(2)]
$\lim_{j \rightarrow\infty} b_j=0$;

\item[(3)]
$\lim_{j\rightarrow\infty}b_j \in(0,\infty)$.
\end{enumerate}

If Case (1) holds, that is,
$\lim_{j\rightarrow\infty}b_j=\infty$. For any $\epsilon
>0$, there exists a constant $R>0$ small enough such that $\int_{B_{R}}|\nabla
u|^{2}\mathrm{d}x<\epsilon^{2}$, where $B_R:=\{x\in\mathbb{R}^{N}:|x|<R\}$. Then, using H\"{o}lder inequality, we see that
\begin{align*}
&\left|\int_{\mathbb{R}^{N}}\nabla u\cdot\left(b_j^{\frac{2\alpha+1}
{2\alpha+2}}
|x|^{2\alpha }x\right)
\exp\left(-\frac{1}{2\alpha+2}
\left|b_j^{\frac{1}{2\alpha+2}}x
\right|^{2\alpha+2}\right)
\mathrm{d}x\right|
\\&\quad\leq%1
\int_{B_{R}}
|\nabla u|
\left|b_j^{\frac{1}{2\alpha+2}}
x\right|^{2\alpha+1}
\exp\left(-\frac{1}{2\alpha+2}
\left|b_j^{\frac{1}{2\alpha+2}}x
\right|^{2\alpha+2}\right)
\mathrm{d}x
\\&\qquad+\int_{B_{R}^{c}}
|\nabla u|
\left|b_j^{\frac{1}{2\alpha+2}}
x\right|^{2\alpha+1}
\exp\left(-\frac{1}{2\alpha+2}
\left|b_j^{\frac{1}{2\alpha+2}}x
\right|^{2\alpha+2}\right)
\mathrm{d}x
\\&\quad\leq%2
\epsilon\left[\int_{B_{R}}
\left|b_j^{\frac{1}{2\alpha+2}}
x\right|^{4\alpha+2}
\exp\left(-\frac{1}{\alpha+1}
\left|b_j^{\frac{1}{2\alpha+2}}x
\right|^{2\alpha+2}\right)
\mathrm{d}x\right]^{\frac{1}{2}}
\\&\qquad
+\left(\int_{B_{R}^{c}}
|\nabla u|^{2}\mathrm{d}x\right)  ^{\frac{1}{2}}
\left[\int_{B_{R}^{c}}
\left|b_j^{\frac{1}{2\alpha+2}}
x\right|^{4\alpha+2}
\exp\left(-\frac{1}{\alpha+1}
\left|b_j^{\frac{1}{2\alpha+2}}x
\right|^{2\alpha+2}\right)
\mathrm{d}x\right]^{\frac{1}{2}}
\\&\quad\leq%3
\epsilon\left[\int_{B_{R}}
\left|b_j^{\frac{1}{2\alpha+2}}
x\right|^{4\alpha+2}
\exp\left(-\frac{1}{\alpha+1}
\left|b_j^{\frac{1}{2\alpha+2}}x
\right|^{2\alpha+2}\right)
\mathrm{d}x\right]^{\frac{1}{2}}
\\&\qquad
+\left[\int_{B_{R}^{c}}
\left|b_j^{\frac{1}{2\alpha+2}}
x\right|^{4\alpha+2}
\exp\left(-\frac{1}{\alpha+1}
\left|b_j^{\frac{1}{2\alpha+2}}x
\right|^{2\alpha+2}\right)
\mathrm{d}x\right]^{\frac{1}{2}}
\\&\quad=%4
b_j^{-\frac{N}{4\alpha+4}}
\epsilon\left[\int_{B_{\bar{R}}}
\left|x\right|^{4\alpha+2}
\exp\left(-\frac{1}{\alpha+1}
\left|x\right|^{2\alpha+2}\right)
\mathrm{d}x\right]^{\frac{1}{2}}
\\&\qquad+b_j^{-\frac{N}{4\alpha+4}}
\left[
\int_{B_{\bar{R}}^{c}}
\left|x\right|^{4\alpha+2}
\exp\left(-\frac{1}{\alpha+1}
\left|x\right|^{2\alpha+2}\right)
\mathrm{d}x\right]^{\frac{1}{2}},
\end{align*}
where $\bar{R}
:=Rb_j^{\frac{1}{2\alpha+2}}$, the above inequality together with \eqref{relationab} implies that
\begin{align*}
&\left|a_j
b_j^{\frac{1}{2\alpha+2}}
\int_{\mathbb{R}^{N}}\nabla u\cdot\left(b_j^{\frac{2\alpha+1}{2\alpha+2}}
|x|^{2\alpha }x\right)
\exp\left(-\frac{1}{2\alpha+2}
\left|b_j^{\frac{1}{2\alpha+2}}x
\right|^{2\alpha+2}\right)
\mathrm{d}x\right|
\\&\quad\leq
M\epsilon\left[
\int_{B_{\bar{R}}}
\left|x\right|^{4\alpha+2}
\exp\left(-\frac{1}{\alpha+1}
\left|x\right|^{2\alpha+2}\right)
\mathrm{d}x\right]^{\frac{1}{2}}
\\&\qquad+M\left[
\int_{B_{\bar{R}}^{c}}
\left|x\right|^{4\alpha+2}
\exp\left(-\frac{1}{\alpha+1}
\left|x\right|^{2\alpha+2}\right)
\mathrm{d}x\right]^{\frac{1}{2}}.
\end{align*}
Let $j\rightarrow\infty$, we obtain
\begin{align*}
&\limsup_{j\rightarrow\infty}\left|a_j
b_j^{\frac{1}{2\alpha+2}}
\int_{\mathbb{R}^{N}}\nabla u\cdot\left(b_j^{\frac{2\alpha+1}{2\alpha+2}}
|x|^{2\alpha }x\right)
\exp\left(-\frac{1}{2\alpha+2}
\left|b_j^{\frac{1}{2\alpha+2}}x
\right|^{2\alpha+2}\right)
\mathrm{d}x\right|
\\&\quad\leq M\epsilon\left[ \int_{\mathbb{R}^{N}}
\left|x\right|^{4\alpha+2}
\exp\left(-\frac{1}{\alpha+1}
\left|x\right|^{2\alpha+2}\right)
\mathrm{d}x\right]^{\frac{1}{2}}.
\end{align*}
Due to the arbitrariness of $\epsilon>0$, we have
\[
\limsup_{j\rightarrow\infty}\left|a_j
b_j^{\frac{1}{2\alpha+2}}
\int_{\mathbb{R}^{N}}\nabla u\cdot\left(b_j^{\frac{2\alpha+1}{2\alpha+2}}
|x|^{2\alpha }x\right)
\exp\left(-\frac{1}{2\alpha+2}
\left|b_j^{\frac{1}{2\alpha+2}}x
\right|^{2\alpha+2}\right)
\mathrm{d}x\right|=0,
\]
and then
\begin{equation*}
0=\lim_{j\rightarrow\infty}
\|\nabla(u-v_{j})\|_{2}^2>1,
\end{equation*}
which is a contradiction. Hence, Case (1) can not happen.

If Case (2) holds, that is,  $b_j\rightarrow0$ as $j\rightarrow\infty$. For any
$\epsilon>0$, there exists a constant $R>0$ large enough such that
$\int_{B_{R}^{c}}|\nabla u|^{2}\mathrm{d}x<\epsilon^{2}$. Thus,
\begin{align*}
& \left|\int_{\mathbb{R}^{N}}\nabla u\cdot\left(b_j^{\frac{2\alpha+1}{2\alpha+2}}
|x|^{2\alpha }x\right)
\exp\left(-\frac{1}{2\alpha+2}
\left|b_j^{\frac{1}{2\alpha+2}}x
\right|^{2\alpha+2}\right)
\mathrm{d}x\right|
\\&\quad\leq%1
\int_{B_{R}^{c}}
|\nabla u|
\left|b_j^{\frac{1}{2\alpha+2}}
x\right|^{2\alpha+1}
\exp\left(-\frac{1}{2\alpha+2}
\left|b_j^{\frac{1}{2\alpha+2}}x
\right|^{2\alpha+2}\right)
\mathrm{d}x
\\&\qquad+\int_{B_{R}}
|\nabla u|
\left|b_j^{\frac{1}{2\alpha+2}}
x\right|^{2\alpha+1}
\exp\left(-\frac{1}{2\alpha+2}
\left|b_j^{\frac{1}{2\alpha+2}}x
\right|^{2\alpha+2}\right)
\mathrm{d}x
\\&\quad\leq%2
\epsilon\left[\int_{B_{R}^{c}}
\left|b_j^{\frac{1}{2\alpha+2}}
x\right|^{4\alpha+2}
\exp\left(-\frac{1}{\alpha+1}
\left|b_j^{\frac{1}{2\alpha+2}}x
\right|^{2\alpha+2}\right)\mathrm{d}x\right]  ^{\frac{1}{2}}
\\&\qquad
+\left(\int_{B_{R}}
|\nabla u|^{2}\mathrm{d}x\right)  ^{\frac{1}{2}}
\left[\int_{B_{R}}
\left|b_j^{\frac{1}{2\alpha+2}}
x\right|^{4\alpha+2}
\exp\left(-\frac{1}{\alpha+1}
\left|b_j^{\frac{1}{2\alpha+2}}x
\right|^{2\alpha+2}\right)\mathrm{d}x\right]  ^{\frac{1}{2}}
\\&\quad\leq%3
\epsilon\left[\int_{B_{R}^{c}}
\left|b_j^{\frac{1}{2\alpha+2}}
x\right|^{4\alpha+2}
\exp\left(-\frac{1}{\alpha+1}
\left|b_j^{\frac{1}{2\alpha+2}}x
\right|^{2\alpha+2}\right)\mathrm{d}x\right]  ^{\frac{1}{2}}
\\&\qquad
+\left[\int_{B_{R}}
\left|b_j^{\frac{1}{2\alpha+2}}
x\right|^{4\alpha+2}
\exp\left(-\frac{1}{\alpha+1}
\left|b_j^{\frac{1}{2\alpha+2}}x
\right|^{2\alpha+2}\right)
\mathrm{d}x\right]^{\frac{1}{2}}
\\&\quad=%4
b_j^{-\frac{N}{4\alpha+4}}
\epsilon\left[\int_{B_{\bar{R}}^{c}}
\left|x\right|^{4\alpha+2}
\exp\left(-\frac{1}{\alpha+1}
\left|x\right|^{2\alpha+2}\right)
\mathrm{d}x\right]^{\frac{1}{2}}
\\&\qquad+b_j^{-\frac{N}{4\alpha+4}}
\left[
\int_{B_{\bar{R}}}
\left|x\right|^{4\alpha+2}
\exp\left(-\frac{1}{\alpha+1}
\left|x\right|^{2\alpha+2}\right)
\mathrm{d}x\right]^{\frac{1}{2}},
\end{align*}
this together with \eqref{relationab} yields that
\begin{align*}
&\left|a_j
b_j^{\frac{1}{2\alpha+2}}
\int_{\mathbb{R}^{N}}\nabla u\cdot\left(b_j^{\frac{2\alpha+1}
{2\alpha+2}}
|x|^{2\alpha }x\right)
\exp\left(-\frac{1}{2\alpha+2}
\left|b_j^{\frac{1}{2\alpha+2}}x
\right|^{2\alpha+2}\right)
\mathrm{d}x\right|
\\&\quad\leq
M\epsilon\left[
\int_{B_{\bar{R}}^{c}}
\left|x\right|^{4\alpha+2}
\exp\left(-\frac{1}{\alpha+1}
\left|x\right|^{2\alpha+2}\right)
\mathrm{d}x\right]^{\frac{1}{2}}
\\&\qquad+M\left[
\int_{B_{\bar{R}}}
\left|x\right|^{4\alpha+2}
\exp\left(-\frac{1}{\alpha+1}
\left|x\right|^{2\alpha+2}\right)
\mathrm{d}x\right]^{\frac{1}{2}}.
\end{align*}
Let $j\rightarrow\infty$, we obtain
\begin{align*}
&\limsup_{j\rightarrow\infty}\left|a_j
b_j^{\frac{1}{2\alpha+2}}
\int_{\mathbb{R}^{N}}\nabla u\cdot\left(b_j^{\frac{2\alpha+1}{2\alpha+2}}
|x|^{2\alpha }x\right)
\exp\left(-\frac{1}{2\alpha+2}
\left|b_j^{\frac{1}{2\alpha+2}}x
\right|^{2\alpha+2}\right)
\mathrm{d}x\right|
\\&\quad\leq M\epsilon\left[ \int_{\mathbb{R}^{N}}
\left|x\right|^{4\alpha+2}
\exp\left(-\frac{1}{\alpha+1}
\left|x\right|^{2\alpha+2}\right)
\mathrm{d}x\right]^{\frac{1}{2}}.
\end{align*}
Since $\epsilon>0$ is arbitrary, then
\[
\limsup_{j\rightarrow\infty}\left|a_j
b_j^{\frac{1}{2\alpha+2}}
\int_{\mathbb{R}^{N}}\nabla u\cdot\left(b_j^{\frac{2\alpha+1}{2\alpha+2}}
|x|^{2\alpha }x\right)
\exp\left(-\frac{1}{2\alpha+2}
\left|b_j^{\frac{1}{2\alpha+2}}x
\right|^{2\alpha+2}\right)
\mathrm{d}x\right|=0,
\]
and then
\begin{equation*}
0=\lim_{j\rightarrow\infty}
\|\nabla(u-v_{j})\|_{2}^2>1,
\end{equation*}
which also leads to a contradiction. Hence, Case (2) also can not happen.

In view of the above argument, both Case (1) and Case (2) can not happen. Therefore, Case (3) happens, then  there exists $b\in(0,\infty)$ such that $\lim_{j\rightarrow\infty}b_j=b$. This combining with \eqref{relationab} yields that $\{a_{j}\}_j$ is bounded. Here the proof of the claim is completed.
\end{proof}

\subsection{Proof of main results}

Based on the above preparations, we are in a position to prove our main results.

\begin{proof}
[\rm\textbf{Proof of Theorem \ref{thm-1}}]
For simplicity of notation, let us denote
\[
K(N,\alpha,k)
:=\sqrt{\left(N+2\alpha\right)^{2}
+4(N-2+k)k}-(N+2\alpha).
\]
To verify the main result of Theorem \ref{thm-1}, it reduces to show $C(N,\alpha,k)=K(N,\alpha,k)$.

$\bullet$ \emph{Step 1: For all $k\in\mathbb{N}^+$, we will verify  that}
\begin{equation}\label{9.19-1}
C(N,\alpha,k)
\geq K(N,\alpha,k).
\end{equation}
Indeed, for constant $K>0$ (the range of this parameter will be provided later), we get
\begin{align}\label{9.19-5}
&\int_{\mathbb{R}^{N+2k}}
\frac{|\Delta v_k(|x|)|^{2}}
{|x|^{2\alpha}}
\mathrm{d}x
+\int_{\mathbb{R}^{N+2k}}
\left[|x|^{2\alpha+2}
|\nabla v_k(|x|)|^{2}
-2(\alpha+1)k
|x|^{2\alpha}|v_k(|x|)|^{2}\right]
\mathrm{d}x
\nonumber\\&\quad  -(N+4\alpha+2+K)\int_{\mathbb{R}^{N+2k}}
|\nabla v_k(|x|)|^{2}\mathrm{d}x
\nonumber\\&\quad=%1
\int_{0}^{\infty}\left|rv''_k
+(N+2k-1)v_k'\right|^{2}
r^{N+2k-2\alpha-3}\mathrm{d}r
+\int_{0}^{\infty}
\left|v_k'\right|^{2}
r^{N+2k+2\alpha+1}
\mathrm{d}r
\nonumber\\&\qquad
-(N+4\alpha+2+K)
\int_{0}^{\infty}
\left|v_k'\right|^{2}
r^{N+2k-1}
\mathrm{d}r
-2(\alpha+1)k
\int_{0}^{\infty}
\left|v_k\right|^{2}
r^{N+2k+2\alpha-1}
\mathrm{d}r
\nonumber\\&\quad=%2
\int_{0}^{\infty}\left|rv''_k
+(N+2k-1)v_k'+r^{2\alpha+2}v_k'
+\frac{2N+2k+4\alpha+K}{2}
r^{2\alpha+1}v_k\right|^{2}
r^{N+2k-2\alpha-3}\mathrm{d}r
\nonumber\\&\qquad
-\left[
\frac{(2N+2k+4\alpha+K)(K-2k)}{4}
+2(\alpha+1)k
\right]
\int_{0}^{\infty}
\left|v_k\right|^{2}
r^{N+2k+2\alpha -1}
\mathrm{d}r.
\end{align}
Then, by choosing $K>0$ satisfying
\[
\frac{(2N+2k+4\alpha+K)
(K-2k)}{4}
+2(\alpha+1)k
\leq0,
\]
which is equivalent to
\begin{equation*}
K^{2}+2(N+2\alpha)K
-4\left(N-2+k\right)k
\leq0.
\end{equation*}
That is, $0<K\leq K(N,\alpha,k)$. Thereby, for some constant $0<K\leq K(N,\alpha,k)$, it follows from \eqref{9.19-5} that
\begin{align*}
&\int_{\mathbb{R}^{N+2k}}
\frac{|\Delta v_k(|x|)|^{2}}
{|x|^{2\alpha}}
\mathrm{d}x
+\int_{\mathbb{R}^{N+2k}}
\left[|x|^{2\alpha+2}
|\nabla v_k(|x|)|^{2}
-2(\alpha+1)k
|x|^{2\alpha}|v_k(|x|)|^{2}\right]
\mathrm{d}x
\nonumber\\&\quad
\ge(N+4\alpha+2+K)
\int_{\mathbb{R}^{N+2k}}
|\nabla v_k(|x|)|^{2}\mathrm{d}x.
\end{align*}
In view of this, it yields from the definition of $C(N,\alpha,k)$ (see \eqref{9.18-3}) that \eqref{9.19-1} holds.

$\bullet$ \emph{Step 2: For all $k\in \mathbb{N}^+$, we will verify that}
\begin{equation}\label{9.19-2}
C(N,\alpha,k)\leq K(N,\alpha,k).
\end{equation}
Choosing $K=K(N,\alpha,k)$ in \eqref{9.19-5}, and then
\begin{align*}
&\int_{\mathbb{R}^{N+2k}}
\frac{|\Delta v_k(|x|)|^{2}}
{|x|^{2\alpha}}
\mathrm{d}x
+\int_{\mathbb{R}^{N+2k}}
\left[|x|^{2\alpha+2}
|\nabla v_k(|x|)|^{2}
-2(\alpha+1)k
|x|^{2\alpha}|v_k(|x|)|^{2}\right]
\mathrm{d}x
\nonumber\\&\qquad  -\left[N+4\alpha+2+K(N,\alpha,k)\right]
\int_{\mathbb{R}^{N+2k}}
|\nabla v_k(|x|)|^{2}\mathrm{d}x
\equiv 0
\end{align*}
if and only if $v_k$ is the solution of
\begin{equation}\label{9.21-1}
rv''_k
+(N+2k-1)v_k'
+r^{2\alpha+2}v_k'
+\frac{2N+2k+4\alpha+K(N,\alpha,k)}{2}
r^{2\alpha+1}v_k=0.
\end{equation}
By Lemma \ref{lem-3.1}, the equation \eqref{9.21-1} has the two independent solutions
{\small
\begin{align*}
v_{k,1}(r)&={_1F_1}
\left(\frac{2N+2k+4\alpha+K(N,\alpha,k)}
{4\alpha+4};
\frac{N+2k+2\alpha}{2\alpha+2};
-\frac{r^{2\alpha+2}}{2\alpha+2}\right),
\\
v_{k,2}(r)&=
\left(-\frac{r^{2\alpha+2}}
{2\alpha+2}\right)^{
-\frac{N+2k-2}{2\alpha+2}}
{_1F_1}
\left(\frac{-2k+4\alpha+4+K(N,\alpha,k)}
{4\alpha+4};
1-\frac{N+2k-2}{2\alpha+2};
-\frac{r^{2\alpha+2}}{2\alpha+2}\right).
\end{align*}}\noindent
Moreover, $v_{k,1}(r)\in C^\infty(\mathbb{R}^+)$ and by \eqref{9.19-7}, for all $l\in\mathbb{N}$,
\begin{align*}
v_{k,1}^{(l)}(r)
&=O\left(r^{
-N-k-2\alpha-\frac{K(N,\alpha,k)}{2}-l}
\right),
\ \
\mathrm{as}
\
r\to\infty.
\end{align*}
Meanwhile, since
\[
v''_{k,1}(r)
=cr^{2\alpha}+o(r^{2\alpha}),\quad v''_{k,2}(r)
=cr^{-N-2k}+o(r^{-N-2k}),
\ \
\mathrm{as}
\
r\to 0,
\]
then
\begin{align*}
\int_{0}^{\infty}
|rv''_{k,1}|^{2}
r^{N+2k-2\alpha-3}\mathrm{d}r
&=\int_{0}^{\infty}
|v''_{k,1}|^{2}
r^{N+2k-2\alpha-1}\mathrm{d}r
<\infty,
\\
\int_{0}^{\infty}
|rv''_{k,2}|^{2}
r^{N+2k-2\alpha-3}\mathrm{d}r
&=\int_{0}^{\infty}
|v''_{k,2}|^{2}
r^{N+2k-2\alpha-1}\mathrm{d}r
=\infty.
\end{align*}
Thus, $r^kv_{k,1}(r)\phi_k(\sigma)$ belongs to $X$, while $r^kv_{k,2}(r)\phi_k(\sigma)$ can not belong to $X$. Therefore, the equality in \eqref{9.21-1} holds if and only if
\begin{equation*}
v_{k,1}(r)=
\,{_1F_1}
\left(\frac{2N+2k+4\alpha+K(N,\alpha,k)}
{4\alpha+4};
\frac{N+2k+2\alpha}{2\alpha+2};
-\frac{r^{2\alpha+2}}{2\alpha+2}\right).
\end{equation*}
In view of this, for $K>K(N,\alpha,k)$ and $k\ge1$,
\begin{align*}
&\int_{\mathbb{R}^{N+2k}}
\frac{|\Delta v_{k,1}|^{2}}
{|x|^{2\alpha}}
\mathrm{d}x
+\int_{\mathbb{R}^{N+2k}}
\left[|x|^{2\alpha+2}
|\nabla v_{k,1}|^{2}
-2(\alpha+1)k
|x|^{2\alpha}|v_{k,1}|^{2}\right]
\mathrm{d}x
\nonumber\\&\qquad  -(N+4\alpha+2+K)\int_{\mathbb{R}^{N+2k}}
|\nabla v_{k,1}|^{2}\mathrm{d}x
\\&\quad=
-\left[
\frac{(2N+2k+4\alpha+K)(K-2k)}{4}
+2(\alpha+1)k\right]
\int_{0}^{\infty}
\left|v_{k,1}\right|^{2}
r^{N+2k+2\alpha-1}
\mathrm{d}r
\\&\quad<0.
\end{align*}
Therefore, from the definition of $C(N,\alpha,k)$ (see \eqref{9.18-3}), the desired estimate \eqref{9.19-2} holds.

$\bullet$ \emph{Step 3: Conclusion.}

Combining \eqref{9.19-1} with  \eqref{9.19-2}, we see that
\begin{equation*}
C(N,\alpha,k)
=K(N,\alpha,k)
=\sqrt{\left(N+2\alpha\right)^{2}
+4(N-2+k)k}-(N+2\alpha).
\end{equation*}
Based on the assumptions $N\ge2$ and  $\alpha>-1$, it is easy to check that $C(N,\alpha,k)$ is a increasing function with respect to $k\ge1$. Thus, for all $k\ge1$,
\begin{align*}
C(N,\alpha,k)\ge C(N,\alpha,1),
\end{align*}
and then
\begin{equation}\label{9.19-10}
\inf_{k\in\mathbb{N}^+}C(N,\alpha,k)
=C(N,\alpha,1)
=\sqrt{\left(N+2\alpha\right)^{2}
+4(N-1)}-(N+2\alpha).
\end{equation}
Hence, the proof is completed.
\end{proof}

\begin{proof}[\rm\textbf{Proof of Theorem \ref{thm-2}}]
Here we aim to determine the constant $K>0$ such that
\begin{align}\label{2ndOrdStab}
&\int_{\mathbb{R}^{N}}
\frac{|\Delta u|^{2}}{|x|^{2\alpha}}
\mathrm{d}x
+\int_{\mathbb{R}^{N}}
\left|x\right|^{2\alpha+2}
|\nabla u|^{2}\mathrm{d}x
-\left(N+4\alpha+2\right)
\int_{\mathbb{R}^{N}}
|\nabla u|^{2}\mathrm{d}x
\nonumber\\&\quad\geq K\inf_{c}\int_{\mathbb{R}^{N}}
\left|\nabla\left[
u-c\exp\left(-\frac{|x|^{2\alpha+2}}
{2\alpha+2}\right)\right] \right|^{2}\mathrm{d}x.
\end{align}
From \eqref{9.12-2}--\eqref{9.12-4}, we have
\begin{align}\label{9.21-3}
&\int_{\mathbb{R}^{N}}
\frac{|\Delta u|^{2}}
{|x|^{2\alpha}}
\mathrm{d}x
+\int_{\mathbb{R}^{N}}
|x|^{2\alpha+2}
|\nabla u|^{2}\mathrm{d}x
-\left(N+4\alpha+2\right)
\int_{\mathbb{R}^{N}}
|\nabla u|^{2}\mathrm{d}x
\nonumber\\&\quad%1
=\int_0^\infty
|w'_0|^{2}
r^{N+2\alpha+1}\mathrm{d}r
+\int_0^\infty
|w_0|^{2}
r^{N+6\alpha+3}
\mathrm{d}r
-(N+4\alpha+2)
\int_0^\infty
|w_0|^{2}r^{N+4\alpha+1}
\mathrm{d}r
\nonumber\\&\qquad
+\sum_{k=1}^{\infty}
\int_{\mathbb{R}^{N+2k}}
\frac{|\Delta v_k|^{2}}
{|x|^{2\alpha}}
\mathrm{d}x
+\sum_{k=1}^{\infty}
\int_{\mathbb{R}^{N+2k}}
\left[|x|^{2\alpha+2}
|\nabla v_k|^{2}
-2(\alpha+1)k
|x|^{2\alpha}|v_k|^{2}\right]
\mathrm{d}x
\nonumber\\&\qquad -(N+4\alpha+2)
\sum_{k=1}^{\infty}
\int_{\mathbb{R}^{N+2k}}
|\nabla v_k|^{2}\mathrm{d}x,
\end{align}
where $u(x)=u(r\sigma)
=\sum_{k=0}^{\infty}
r^{k}v_{k}(r)
\phi_{k}(\sigma)$ and $w_0(r)
=\dfrac{v_0^{\prime}(r)}
{r^{2\alpha+1}}$.

Meanwhile,
\begin{align}\label{9.14-1}
&\inf_{c}\int_{\mathbb{R}^{N}}
\left|\nabla\left[
u-c\exp\left(-\frac{|x|^{2\alpha+2}}
{2\alpha+2}\right)\right]  \right|^{2}\mathrm{d}x
\nonumber\\&\quad=
\int_{\mathbb{R}^{N}}
\left|\nabla u- \dfrac{\int_{\mathbb{R}^{N}}
\nabla u\cdot\nabla\left[
\exp\left(-\frac{|x|^{2\alpha+2}}
{2\alpha+2}\right)\right]
\mathrm{d}x}
{\int_{\mathbb{R}^{N}}
\left|\nabla
\left[\exp\left(-\frac{|x|^{2\alpha+2}}
{2\alpha+2}\right)\right]  \right|^{2}\mathrm{d}x} \nabla\left[\exp
\left(-\frac{|x|^{2\alpha+2}}
{2\alpha+2}\right)\right]  \right|^{2}\mathrm{d}x
\nonumber\\&\quad=%2
\int_{\mathbb{R}^{N}}
\left|\nabla u\right|^2\mathrm{d}x
-\dfrac{\left(\int_{\mathbb{R}^{N}}
\nabla u\cdot\nabla\left[
\exp\left(-\frac{|x|^{2\alpha+2}}
{2\alpha+2}\right)\right]
\mathrm{d}x\right)^2}
{\int_{\mathbb{R}^{N}}
\left|\nabla
\left[\exp\left(-\frac{|x|^{2\alpha+2}}
{2\alpha+2}\right)\right]  \right|^{2}\mathrm{d}x}
\nonumber\\&\quad=%3
\int_0^\infty
|w_0|^{2}r^{N+4\alpha+1}
\mathrm{d}r
+\sum_{k=1}^{\infty}
\int_{\mathbb{R}^{N+2k}}
|\nabla v_k|^{2}
\mathrm{d}x
\nonumber\\&\qquad
-\dfrac{\left(\int_{\mathbb{R}^{N}}
\nabla u\cdot\nabla\left[
\exp\left(-\frac{|x|^{2\alpha+2}}
{2\alpha+2}\right)\right]
\mathrm{d}x\right)^2}
{\int_{\mathbb{R}^{N}}
\left|\nabla
\left[\exp\left(-\frac{|x|^{2\alpha+2}}
{2\alpha+2}\right)\right]  \right|^{2}\mathrm{d}x}.
\end{align}
Note that
\begin{align}\label{9.14-2}
&\int_{\mathbb{R}^{N}}
\nabla u\cdot\nabla\left[
\exp\left(-\frac{|x|^{2\alpha+2}}
{2\alpha+2}\right)\right]
\mathrm{d}x
\nonumber\\&\quad=%1
-\int_{\mathbb{R}^{N}}u
\Delta\left[
\exp\left(-\frac{|x|^{2\alpha+2}}
{2\alpha+2}\right)\right]
\mathrm{d}x
\nonumber\\&\quad=%2
-\int_{\mathbb{R}^{N}}
\left[\sum_{k=0}^{\infty}
v_{k}(|x|)|x|^{k}
\phi_{k}(\sigma)\right]
\Delta\left[
\exp\left(-\frac{|x|^{2\alpha+2}}
{2\alpha+2}\right)\right]
\mathrm{d}x
\nonumber\\&\quad=%3
-\sum_{k=0}^{\infty}
\int_{\mathbb{R}^{N}}
v_{k}(|x|)|x|^{k}
\phi_{k}(\sigma)
\Delta\left[
\exp\left(-\frac{|x|^{2\alpha+2}}
{2\alpha+2}\right)\right]
\mathrm{d}x
\nonumber\\&\quad=%4
-\int_{\mathbb{R}^{N}}
v_{0}(|x|)
\phi_{0}(\sigma)
\Delta\left[
\exp\left(-\frac{|x|^{2\alpha+2}}
{2\alpha+2}\right)\right]
\mathrm{d}x
\nonumber\\&\quad=%5
\int_{\mathbb{R}^{N}}
\nabla v_{0}(|x|)
\cdot\nabla\left[
\exp\left(-\frac{|x|^{2\alpha+2}}
{2\alpha+2}\right)\right]
\mathrm{d}x
\nonumber\\&\quad=%6
-\int_0^\infty
v'_0(r)\exp\left(-\frac{r^{2\alpha+2}}
{2\alpha+2}\right)
r^{N+2\alpha}\mathrm{d}r
\nonumber\\&\quad=%7
-\int_0^\infty
w_0(r)\exp\left(-\frac{r^{2\alpha+2}}
{2\alpha+2}\right)
r^{N+4\alpha+1}\mathrm{d}r,
\end{align}
with the aid of  $\int_{\mathbb{S}^{N-1}}
\phi_{k}(\sigma)
\mathrm{d}\sigma=0$ for all
$k\geq1$, and $\phi_{0}(\sigma)\equiv1$. Then, substituting \eqref{9.14-2} into \eqref{9.14-1}, there holds
\begin{align}\label{9.21-2}
&\inf_{c}\int_{\mathbb{R}^{N}}
\left|\nabla\left[
u-c\exp\left(-\frac{|x|^{2\alpha+2}}
{2\alpha+2}\right)\right]  \right|^{2}\mathrm{d}x
\nonumber\\&\quad=%1
\int_0^\infty
|w_0|^{2}r^{N+4\alpha+1}\mathrm{d}r
+\sum_{k=1}^{\infty}
\int_{\mathbb{R}^{N+2k}}
|\nabla v_k|^{2}\mathrm{d}x
-\frac{\left[
\int_0^\infty
w_{0}
\exp\left(-\frac{r^{2\alpha+2}}
{2\alpha+2}\right)r^{N+4\alpha+1}
\mathrm{d}r\right]^2}
{\int_0^\infty
\exp\left(-\frac{r^{2\alpha+2}}
{\alpha+1}\right)
r^{N+4\alpha+1}\mathrm{d}r}
\nonumber\\&\quad%2
=\inf_{c}\int_0^\infty
\left|w_0-c
\exp\left(-\frac{r^{2\alpha+2}}
{2\alpha+2}\right)\right|^2
r^{N+4\alpha+1}\mathrm{d}r
+\sum_{k=1}^{\infty}
\int_{\mathbb{R}^{N+2k}}
|\nabla v_k|^{2}\mathrm{d}x.
\end{align}
Hence, combining \eqref{9.21-3} with \eqref{9.21-2}, \eqref{2ndOrdStab} is equivalent to
\begin{align}\label{9.19-11}
&\int_0^\infty
|w'_0|^{2}
r^{N+2\alpha+1}\mathrm{d}r
+\int_0^\infty
|w_0|^{2}
r^{N+6\alpha+3}
\mathrm{d}r
-(N+4\alpha+2)
\int_0^\infty
|w_0|^{2}r^{N+4\alpha+1}
\mathrm{d}r
\nonumber\\&\qquad
+\sum_{k=1}^{\infty}
\int_{\mathbb{R}^{N+2k}}
\frac{|\Delta v_k|^{2}}
{|x|^{2\alpha}}\mathrm{d}x
+\sum_{k=1}^{\infty}
\int_{\mathbb{R}^{N+2k}}
\left[|x|^{2\alpha+2}
|\nabla v_k|^{2}
-2(\alpha+1)k
|x|^{2\alpha}|v_k|^{2}\right]
\mathrm{d}x
\nonumber\\&\qquad -(N+4\alpha+2+K)
\sum_{k=1}^{\infty}
\int_{\mathbb{R}^{N+2k}}
|\nabla v_k|^{2}\mathrm{d}x
\nonumber\\&\quad\ge
K\inf_{c}\int_0^\infty
\left|w_0-c
\exp\left(-\frac{r^{2\alpha+2}}
{2\alpha+2}\right)\right|^2
r^{N+4\alpha+1}\mathrm{d}r.
\end{align}
Thus, in order to obtain \eqref{9.19-11}, it suffices to verify that
\begin{align}\label{N1}
&\int_0^\infty
|w'_0|^{2}
r^{N+2\alpha+1}\mathrm{d}r
+\int_0^\infty
|w_0|^{2}
r^{N+6\alpha+3}
\mathrm{d}r
-(N+4\alpha+2)
\int_0^\infty
|w_0|^{2}r^{N+4\alpha+1}
\mathrm{d}r
\nonumber\\&\quad\ge%2
K\inf_{c}\int_0^\infty
\left|w_0-c
\exp\left(-\frac{r^{2\alpha+2}}
{2\alpha+2}\right)\right|^2
r^{N+4\alpha+1}\mathrm{d}r,
\end{align}
and
\begin{align}\label{N2}
&\int_{\mathbb{R}^{N+2k}}
\frac{|\Delta v_k|^{2}}
{|x|^{2\alpha}}
\mathrm{d}x
+\int_{\mathbb{R}^{N+2k}}
\left[|x|^{2\alpha+2}
|\nabla v_k|^{2}
-2(\alpha+1)k
|x|^{2\alpha}|v_k|^{2}\right]
\mathrm{d}x
\nonumber\\&\quad
\ge(N+4\alpha+2+K)
\int_{\mathbb{R}^{N+2k}}
|\nabla v_k|^{2}\mathrm{d}x,
\end{align}
for all $k\ge1$. Hence, it yields from Corollary \ref{coro-5.2} (by replacing $m$ into $\frac{N+4\alpha+2}{\alpha+1}$) that
\begin{align}\label{9.22-2}
&\int_0^\infty
|w'_0|^{2}
r^{N+2\alpha+1}\mathrm{d}r
+\int_0^\infty
|w_0|^{2}
r^{N+6\alpha+3}
\mathrm{d}r
-(N+4\alpha+2)
\int_0^\infty
|w_0|^{2}r^{N+4\alpha+1}
\mathrm{d}r
\nonumber\\&\quad=%1
\int_0^\infty
\left|w'_0
+w_0r^{2\alpha+1}
\right|^2r^{N+2\alpha+1}\mathrm{d}r
\nonumber\\&\quad=%2
\int_0^\infty
\left|
\left[w_0\exp\left(\frac{r^{2\alpha+2}}
{2\alpha+2}\right)
\right]'\right|^2
\exp\left(-\frac{r^{2\alpha+2}}
{\alpha+1}\right)
r^{N+2\alpha+1}\mathrm{d}r
\nonumber\\&\quad\ge%3
4(\alpha+1)
\inf_{c}
\int_0^\infty
\left|w_0\exp\left(\frac{r^{2\alpha+2}}
{2\alpha+2}\right)-c\right|^2
\exp\left(-\frac{r^{2\alpha+2}}
{\alpha+1}\right)
r^{N+4\alpha+1}\mathrm{d}r
\nonumber\\&\quad=%4
4(\alpha+1)
\inf_{c}
\int_0^\infty
\left|w_0-c\exp
\left(-\frac{r^{2\alpha+2}}
{2\alpha+2}\right)\right|^2
r^{N+4\alpha+1}\mathrm{d}r,
\end{align}
Therefore, this together with \eqref{N1} implies that
\[
0<K\le 4(\alpha+1).
\]
Also, \eqref{N2} is equivalent to
\[
K\leq C(N,\alpha,k).
\]
Therefore,
\[
\delta_{1}(u)
\geq K\inf_{c}\int_{\mathbb{R}^{N}}
\left|\nabla\left[
u-c\exp\left(-\frac{|x|^{2\alpha+2}}
{2\alpha+2}\right)\right] \right|^{2}\mathrm{d}x,
\]
where
\begin{equation*}
K=\min
\left\{\inf_{k\in\mathbb{N}^+}
C(N,\alpha,k),
4(\alpha+1)\right\}
=\min\left\{C(N,\alpha,1),
4(\alpha+1)\right\}
=C(N,\alpha),
\end{equation*}
with the aid of \eqref{9.19-10} and \eqref{defcna}.

Now, we will show that $K=C(N,\alpha)$ is sharp in \eqref{2ndOrdStab}. Indeed, assume by contradiction that there exists $K>C(N,\alpha)$ such that
\begin{align*}
&\int_{\mathbb{R}^{N}}
\frac{|\Delta u|^{2}}{|x|^{2\alpha}}
\mathrm{d}x
+\int_{\mathbb{R}^{N}}
|x|^{2\alpha+2}
|\nabla u|^{2}\mathrm{d}x
-(N+4\alpha+2)\int_{\mathbb{R}^{N}}
|\nabla u|^{2}\mathrm{d}x
\nonumber\\&\quad\geq K\inf_{c}\int_{\mathbb{R}^{N}}
\left|\nabla\left[
u-c\exp\left(-\frac{|x|^{2\alpha+2}}
{2\alpha+2}\right)\right]\right| ^{2}\mathrm{d}x.
\end{align*}
Let $\varepsilon
=\frac{K-C(N,\alpha)}{2}>0$, then we can find $m\in\mathbb{N}^+$ such that
\begin{align*}
C(N,\alpha,m)
&=\inf_{u \text{ is radial}}
\frac{\int_{\mathbb{R}^{N+2m}}
\frac{|\Delta u|^{2}}{|x|^{2\alpha}}
\mathrm{d}x
+\int_{\mathbb{R}^{N+2m}}
\left[|x|^{2\alpha+2}
\left|\nabla u\right|^{2}
\mathrm{d}x
-2(\alpha+1)m
|x|^{2\alpha}|u|^{2}
\right]\mathrm{d}x}
{\int_{\mathbb{R}^{N+2m}}
|\nabla u|^{2}\mathrm{d}x}
\\&\quad-\left(N+4\alpha+2\right)
\\&<C(N,\alpha)+\varepsilon.
\end{align*}
Moreover, there exists a sequence $v_{j}(r)$ such that $\int_{\mathbb{R}
^{N+2m}}|\nabla v_{j}|^{2}\mathrm{d}x=1$ and
\begin{align*}
&\int_{\mathbb{R}^{N+2m}}
\frac{|\Delta v_j|^{2}}{|x|^{2\alpha}}
\mathrm{d}x
+\int_{\mathbb{R}^{N+2m}}
\left[|x|^{2\alpha+2}
\left|\nabla v_j\right|^{2}
\mathrm{d}x
-2(\alpha+1)m
|x|^{2\alpha}|v_j|^{2}
\right]\mathrm{d}x
-\left(N+4\alpha+2\right)
\\&\quad\rightarrow C\left(N,\alpha,m\right),
\ \
j\to\infty.
\end{align*}
Now, choosing $u_{j}=r^{m}v_{j}(r)\phi_{m}(\sigma)$, then by the above calculations, we have
\begin{align*}
&\int_{\mathbb{R}^{N}}
\frac{|\Delta u_j|^{2}}{|x|^{2\alpha}}
\mathrm{d}x
+\int_{\mathbb{R}^{N}}
|x|^{2\alpha+2}
|\nabla u_j|^{2}\mathrm{d}x
-(N+4\alpha+2)\int_{\mathbb{R}^{N}}
|\nabla u_j|^{2}\mathrm{d}x
\nonumber\\&\qquad -K\inf_{c}\int_{\mathbb{R}^{N}}
\left|\nabla\left[
u_j-c\exp\left(-\frac{|x|^{2\alpha+2}}
{2\alpha+2}\right)\right]\right| ^{2}\mathrm{d}x
\nonumber\\&\quad
=\int_{\mathbb{R}^{N+2m}}
\frac{|\Delta v_j|^{2}}{|x|^{2\alpha}}
\mathrm{d}x
+\int_{\mathbb{R}^{N+2m}}
\left[|x|^{2\alpha+2}
\left|\nabla v_j\right|^{2}
\mathrm{d}x
-2(\alpha+1)m
|x|^{2\alpha}|v_j|^{2}
\right]\mathrm{d}x
\\&\qquad
-\left(N+4\alpha+2+K\right)
\nonumber\\&\quad
\to C(N,\alpha,m)-K
\\&\quad
<C(N,\alpha)+\varepsilon-K<0,
\end{align*}
as $j\to\infty$, which is a contradiction. Hence, $C(N,\alpha)$ is the sharp constant of  \eqref{9.22-1}.

Lastly, we will verify the attainability for the sharp constant
$C(N,\alpha)$ of \eqref{9.22-1}. Indeed, if $C(N,\alpha)=C(N,\alpha,1)$,  from the proof of Theorem \ref{thm-1} (with  $k=1$), to reach the sharp equality
\begin{equation}\label{9.22-3}
\delta_{1}(u)
=C(N,\alpha)
\inf_{c}\int_{\mathbb{R}^{N}}
\left|\nabla\left[
u-c\exp\left(-\frac{|x|^{2\alpha+2}}
{2\alpha+2}\right)\right]\right| ^{2}\mathrm{d}x
\end{equation}
if and only if the optimizer $u$ satisfies the form $u(x)=C|x|v_1(|x|)\phi_1(\sigma)$. where
\begin{equation*}
v_1(|x|)
={_1F_1}
\left(\frac{2N+4\alpha+2+C(N,\alpha)}
{4\alpha+4};
\frac{N+2\alpha+2}{2\alpha+2};
-\frac{|x|^{2\alpha+2}}{2\alpha+2}\right).
\end{equation*}

Meanwhile, if $C(N,\alpha)=4(\alpha+1)$, to reach the sharp equality \eqref{9.22-3} if and only if the equality in \eqref{9.22-2} holds, that is,
\[
w_0(r)
=\left(a_0
+\frac{2a_1}{\alpha+1}
r^{2\alpha+2}\right)
\exp\left(-\frac{r^{2\alpha+2}}
{2\alpha+2}\right),
\]
then
\begin{align*}
u(x)
=v_0(|x|)
&=\int_0^{|x|}
r^{2\alpha+1}w_0(r)\mathrm{d}r
\\&=%1
\int_0^{|x|}
r^{2\alpha+1}
\left(a_0+\frac{2a_1}{\alpha+1}
r^{2\alpha+2}\right)
\exp\left(-\frac{r^{2\alpha+2}}
{2\alpha+2}\right)\mathrm{d}r
\\&=%2
a_0\int_0^{|x|}
r^{2\alpha+1}
\exp\left(-\frac{r^{2\alpha+2}}
{2\alpha+2}\right)\mathrm{d}r
+\frac{2a_1}{\alpha+1}\int_0^{|x|}
r^{4\alpha+3}
\exp
\left(-\frac{r^{2\alpha+2}}
{2\alpha+2}\right)\mathrm{d}r
\\&=%3
a_0\int_0^{\frac{|x|^{2\alpha+2}}
{2\alpha+2}}
e^{-r}\mathrm{d}r
+4a_1\int_0^{
\frac{|x|^{2\alpha+2}}{2\alpha+2}}
re^{-r}\mathrm{d}r
\\&=%4
-\frac{2a_1}{\alpha+1}
|x|^{2\alpha+2}
\exp\left(-\frac{|x|^{2\alpha+2}}
{2\alpha+2}\right)
-4a_1
\exp\left(-\frac{|x|^{2\alpha+2}}
{2\alpha+2}\right)
+4a_1
\\&\quad
-a_0\exp\left(-\frac{|x|^{2\alpha+2}}
{2\alpha+2}\right)
+a_0
\\&=%5
-\left(\frac{2a_1}{\alpha+1}
|x|^{2\alpha+2}
+4a_1+a_0\right)
\exp\left(-\frac{|x|^{2\alpha+2}}
{2\alpha+2}\right)
+4a_1+a_0.
\end{align*}
The proof is completed here.
\end{proof}

Based on Theorem \ref{thm-2}, by the scaling argument, we turn to prove Theorem \ref{thm-3}.

\begin{proof}
[\rm\textbf{Proof of Theorem \ref{thm-3}}]
For $\lambda>0$, replacing $u$ by  $u_{\lambda}(x)
=\lambda^{\frac{N-2}{2}}u(\lambda x)$
in \eqref{2ndOrdStab}, we get
\begin{align*}
&\int_{\mathbb{R}^{N}}
\frac{|\Delta u_{\lambda}|^{2}}
{|x|^{2\alpha}}\mathrm{d}x
+\int_{\mathbb{R}^{N}}
\left|x\right|^{2\alpha+2}
|\nabla u_{\lambda}|^{2}\mathrm{d}x
-(N+4\alpha+2)\int
_{\mathbb{R}^{N}}
|\nabla u_{\lambda}|^{2}
\mathrm{d}x
\nonumber\\&\quad\geq
C(N,\alpha)
\inf_{c}\int_{\mathbb{R}^{N}}
\left|\nabla\left[u_{\lambda}
-c\exp\left(
-\frac{\left|x\right|^{2\alpha+2}}
{2\alpha+2}\right)\right]
\right|^{2}\mathrm{d}x,
\end{align*}
which is equivalent to
\begin{align*}
&\lambda^{2\alpha+2}
\int_{\mathbb{R}^{N}}
\frac{|\Delta u|^{2}}{|x|^{2\alpha}}
\mathrm{d}x
+\lambda^{-2\alpha-2}
\int_{\mathbb{R}^{N}}
\left|x\right|^{2\alpha+2}
|\nabla u|^{2}\mathrm{d}x
-(N+4\alpha+2)\int_{\mathbb{R}^{N}}
|\nabla u|^{2}\mathrm{d}x
\nonumber\\&\quad
\geq C(N,\alpha)  \inf_{c}\int_{\mathbb{R}^{N}}
\left|\nabla\left[u
-c\exp\left(
-\frac{\left|x\right|^{2\alpha+2}}
{(2\alpha+2)\lambda^{2\alpha+2}}
\right)\right]
\right|^{2}\mathrm{d}x.
\end{align*}
By choosing
\[
\lambda^{2\alpha+2}=\left(  \dfrac{\int_{\mathbb{R}^{N}}
\left|x\right|^{2\alpha+2}
|\nabla u|^{2}\mathrm{d}x}
{\int_{\mathbb{R}^{N}}
\frac{|\Delta u|^{2}}{|x|^{2\alpha}}
\mathrm{d}x}\right)^{\frac{1}{2}},
\]
we get
\begin{align*}
&2\left(\int_{\mathbb{R}^{N}}
\frac{|\Delta u|^{2}}{|x|^{2\alpha}}
\mathrm{d}x\right)^{\frac{1}{2}}
\left(\int_{\mathbb{R}^{N}}
\left|x\right|^{2\alpha+2}
|\nabla u|^{2}\mathrm{d}x\right)
^{\frac{1}{2}}  -(N+4\alpha+2)\int_{\mathbb{R}^{N}}
|\nabla u|^{2}\mathrm{d}x
\\&\quad\geq C(N,\alpha)  \inf_{c}\int_{\mathbb{R}^{N}}
\left|\nabla\left[u
-c\exp\left(
-\frac{\left|x\right|^{2\alpha+2}}
{(2\alpha+2)\lambda^{2\alpha+2}}
\right)\right]
\right|^{2}\mathrm{d}x.
\end{align*}
Therefore,
\begin{align*}
&\left(\int_{\mathbb{R}^{N}}
\frac{|\Delta u|^{2}}{|x|^{2\alpha}}
\mathrm{d}x\right)  ^{\frac{1}{2}}
\left(\int_{\mathbb{R}^{N}}
\left|x\right|^{2\alpha+2}
|\nabla u|^{2}
\mathrm{d}x\right)^{\frac{1}{2}}
-\frac{N+4\alpha+2}{2}
\int_{\mathbb{R}^{N}}
|\nabla u|^{2}\mathrm{d}x
\\&\quad\geq \dfrac{C(N,\alpha)}{2} \inf_{c}\int_{\mathbb{R}^{N}}
\left|\nabla\left[u
-c\exp\left(
-\frac{\left|x\right|^{2\alpha+2}}
{(2\alpha+2)\lambda^{2\alpha+2}}
\right)\right]
\right|^{2}\mathrm{d}x
\\&\quad
\geq\dfrac{C(N,\alpha)}{2}
\inf_{u^{\ast}\in E_{SHUP}}
\left\|\nabla(u
-u^{\ast})\right\|_{2}^{2}.
\end{align*}
Since $C(N,\alpha)$ is sharp in Theorem \ref{thm-2}, it is easy to see
that $\dfrac{C(N,\alpha)}{2}$ is sharp.
\end{proof}

\begin{proof}
[\rm\textbf{Proof of Theorem \ref{thm-4}}]
Without loss of generality, assume that
$\left\|\nabla u\right\|_{2}^{2}=1$ (otherwise, if
$\left\|\nabla u\right\|_{2}^{2}=0$, the inequality \eqref{cdtStab} holds evidently). We will split the proof into two cases.

\textbf{Case 1:} $\delta_{2}(u)
<\frac{C(N,\alpha)}{2}$. From Lemma \ref{PropOfE}, there exists $v\in E_{SHUP}$ such that
\[
\inf_{u^{\ast}\in E_{SHUP}}
\|\nabla(u-u^{\ast})\|_{2}^{2}
=\|\nabla(u-v)\|_{2}^{2}.
\]
This together with Theorem \ref{thm-3} implies that
\begin{equation}\label{9.18-5}
\|\nabla(u-v)\|^{2}_2
=\inf_{u^{\ast}\in E_{SHUP}}
\|\nabla(u-u^{\ast})\|_{2}^{2}
\leq\dfrac{2}{C(N,\alpha)}
\delta_{2}(u)<1.
\end{equation}
Since $\|\nabla u\|_{2}^{2}=1$, then
$\|\nabla v\|_{2}^{2}\neq0$ (if not, $v$ would be a constant, which is a contradiction). Let $\lambda
=\|\nabla v\|_2^{-1}>0$, and a function $w=\lambda v\in E_{SHUP}$ satisfying $\|\nabla
w\|_{2}^{2}=\lambda^{2}\|\nabla v\|_{2}^{2}=1.$ Hence,
\begin{align*}
&\dfrac{C(N,\alpha)}{2}
\inf_{u^{\ast}\in E_{SHUP}}
\left\{\|\nabla(u
-u^{\ast})\|_{2}^{2}:
\|\nabla u\|_{2}^{2}
=\|\nabla u^{\ast}\|_{2}^{2}\right\}
\\&\quad%1
\leq\dfrac{C(N,\alpha)}{2}
\|\nabla(u-w)\|^{2}_2
\\&\quad%2
=\dfrac{C(N,\alpha)}{2}
\left(2-2\lambda\int_{\mathbb{R}^{N}}
\nabla u\cdot\nabla v\mathrm{d}x\right).
\end{align*}
Notice that
\begin{equation*}
\|\nabla(u-v)\|_2^{2}
=1-2\int_{\mathbb{R}^{N}}
\nabla u\cdot\nabla v\mathrm{d}x
+\dfrac{1}{\lambda^{2}},
\end{equation*}
which together with \eqref{9.18-5} implies that
\[
1-2\int_{\mathbb{R}^{N}}\nabla u\cdot\nabla v\mathrm{d}x
+\dfrac{1}{\lambda^{2}}
\leq\dfrac{2}{C(N,\alpha)}
\delta_{2}(u)<1,
\]
and
\begin{equation}\label{9.18-6}
0<\dfrac{1}{2\lambda^{2}}
\leq\int_{\mathbb{R}^{N}}\nabla u\cdot\nabla v\mathrm{d}x.
\end{equation}
Then, to prove \eqref{cdtStab}, it suffices to prove that
\begin{align*}
2\delta_{2}(u)
&\geq C(N,\alpha)
\left(1-2\int_{\mathbb{R}^{N}}
\nabla u\cdot\nabla v\mathrm{d}x+\dfrac{1}{\lambda^{2}}\right) \\&\geq\dfrac{C(N,\alpha)}{2}
\left(2-2\lambda
\int_{\mathbb{R}^{N}}\nabla u\cdot\nabla v\mathrm{d}x\right),
\end{align*}
which is equivalent to
\begin{equation}\label{9.19-12}
(2-\lambda)\int_{\mathbb{R}^{N}}\nabla u\cdot\nabla v\mathrm{d}x\leq\dfrac{1}
{\lambda^{2}}.
\end{equation}
In fact, by using H\"{o}lder's inequality, we get
\[
0<
\lambda\int_{\mathbb{R}^{N}}
\nabla u\cdot\nabla v\mathrm{d}x
=\int_{\mathbb{R}^{N}}
\nabla u\cdot\nabla w\mathrm{d}x
\le\|\nabla u\|_2\|\nabla w\|_2
=1,
\]
which together with \eqref{9.18-6} yields that
\[
0<\dfrac{1}{2\lambda^{2}}
\leq\int_{\mathbb{R}^{N}}
\nabla u\cdot\nabla v\mathrm{d}x
\leq\dfrac{1}{\lambda}.
\]
Therefore, if $0<\lambda\leq2$,
\[
(2-\lambda)\int_{\mathbb{R}^{N}}
\nabla u\cdot\nabla v\mathrm{d}x
\leq(2-\lambda)
\frac{1}{\lambda}
\leq\dfrac{1}{\lambda^{2}},
\]
if $\lambda>2$,
\begin{equation*}
(2-\lambda)\int_{\mathbb{R}^{N}}
\nabla u\cdot\nabla v\mathrm{d}x
<0<\dfrac{1}{\lambda^{2}},
\end{equation*}
as our desired estimate \eqref{9.19-12}.

\textbf{Case 2:} $\delta_{2}(u)
\geq\frac{C(N,\alpha)}{2}$. In this case, we have
\begin{align*}
&\inf_{u^{\ast}\in E_{SHUP}}
\left\{
\|\nabla(u-u^{\ast})\|_{2}^{2}:
\|\nabla u^{\ast}\|_{2}^{2}
=\|\nabla u\|_{2}^{2}=1\right\}
\\&\quad%1
\leq\dfrac{1}{2}
\inf_{u^{\ast}\in E_{SHUP}}
\left\{\|\nabla(u+u^{\ast})\|_{2}^{2}
+\|\nabla(u-u^{\ast})\|_{2}^{2}:
\|\nabla u^{\ast}\|_{2}^{2}
=\|\nabla u\|_{2}^{2}=1\right\}
\\&\quad%2
=\dfrac{1}{2}
\inf_{u^{\ast}\in E_{SHUP}}
\left\{2\left(\|\nabla u\|_{2}^{2}
+\|\nabla u^{\ast}\|_{2}^{2}\right):
\|\nabla u^{\ast}\|_{2}^{2}
=\|\nabla u\|_{2}^{2}=1\right\}
\\&\quad%3
=2\leq\dfrac{4}{C(N,\alpha)}
\delta_{2}(u),
\end{align*}
which implies that \eqref{cdtStab} holds. This completes the proof of Theorem \ref{thm-4}.
\end{proof}

\section*{Data Availability Statement}
\noindent No data was used for the research described in the article.

\end{document}